\newtheorem{theorem}{Theorem}[section]
\newtheorem{proposition}{Proposition}[section]
\newtheorem{lemma}{Lemma}[section]
\newtheorem{remark}{Remark}[section]
\algrenewcommand\textproc{\texttt}
\title{Computing the Matrix Fractional Power with the Double Exponential Formula\thanks{%
This paper is an augmented version of our previous report (F. Tatsuoka, T. Sogabe, Y. Miyatake, and S.-L. Zhang, A note on computing the matrix fractional power using the double exponential formula, Trans. Jpn. Soc. Ind. Appl. Math., 28(2018), pp. 142--161), which is in Japanese.
There are two differences between this paper and the previous report.
One is that this paper presents the convergence rate analysis of the double exponential (DE) formula, while the previous report does not.
The other is that this work employs a new transformation $t(x) = \exp(\alpha\pi\sinh(x)/2)$, while the previous work used the transformation $t(x) = \exp(\alpha\sinh(x))$.
Several numerical results have shown that the transformation employed in this work is better than that in the previous work in terms of the convergence speed.
}}
\author{%
  Fuminori Tatsuoka\footnotemark[2]
  \and Tomohiro Sogabe\footnotemark[2]
  \and Yuto Miyatake\footnotemark[3]
  \and Tomoya Kemmochi\footnotemark[2]
  \and Shao-Liang Zhang\footnotemark[2]
}
\newcommand{\bbC}{\mathbb{C}}
\newcommand{\bbN}{\mathbb{N}}
\newcommand{\bbR}{\mathbb{R}}
\newcommand{\calD}{\mathcal{D}}
\newcommand{\calI}{\mathcal{I}}
\newcommand{\calL}{\mathcal{L}}
\newcommand{\calO}{\mathcal{O}}
\newcommand{\bmb}{\boldsymbol{b}}
\newcommand{\bmx}{\boldsymbol{x}}
\newcommand{\rme}{\mathrm{e}}
\newcommand{\rmi}{\mathrm{i}}
\newcommand{\rmW}{\mathrm{W}}
\newcommand{\rmIm}{\mathrm{Im}}
\newcommand{\dd}[1]{\,\mathrm{d} #1}
\newcommand{\Fde}{F_{\mathrm{DE}}}
\newcommand{\fde}{f_{\mathrm{DE}}}
\newcommand{\norm}[1]{\left\lVert #1 \right\rVert}
\newcommand{\tildeFde}{\tilde{F}_{\mathrm{DE}}}
\newcommand{\lambdamax}{\lambda_{\mathrm{max}}}
\newcommand{\lambdamin}{\lambda_{\mathrm{min}}}
\newcommand{\mumax}{\mu_{\mathrm{max}}}
\newcommand{\mumin}{\mu_{\mathrm{min}}}
\newcommand{\sigmamax}{\sigma_{\mathrm{max}}}
\newcommand{\sigmamin}{\sigma_{\mathrm{min}}}
\newcommand{\asinh}{\mathrm{asinh}}
\begin{document}

\maketitle

\renewcommand{\thefootnote}{\fnsymbol{footnote}}
\footnotetext[2]{Department of Applied Physics, Graduate School of Engineering, Nagoya University, Furo-cho, Chikusa-ku, Nagoya 464-8603, Japan (\texttt{\{f-tatsuoka,sogabe,kemmochi,zhang\}@na.nuap.nagoya-u.ac.jp}).}
\footnotetext[3]{Cybermedia Center, Osaka University, 1-32 Machikaneyama, Toyonaka, Osaka 560-0043, Japan (\texttt{miyatake@cas.cmc.osaka-u.ac.jp}).}

\renewcommand{\thefootnote}{\arabic{footnote}}
\begin{abstract}
  Two quadrature-based algorithms for computing the matrix fractional power $A^\alpha$ are presented in this paper.
  These algorithms are based on the double exponential (DE) formula, which is well-known for its effectiveness in computing improper integrals as well as in treating nearly arbitrary endpoint singularities.
  The DE formula transforms a given integral into another integral that is suited for the trapezoidal rule; in this process, the integral interval is transformed to the infinite interval.
  Therefore, it is necessary to truncate the infinite interval into an appropriate finite interval.
  In this paper, a truncation method, which is based on a truncation error analysis specialized to the computation of $A^\alpha$, is proposed.
  Then, two algorithms are presented--- one computes $A^\alpha$ with a fixed number of abscissas, and the other computes $A^\alpha$ adaptively.
  Subsequently, the convergence rate of the DE formula for Hermitian positive definite matrices is analyzed.
  The convergence rate analysis shows that the DE formula converges faster than the Gaussian quadrature when $A$ is ill-conditioned and $\alpha$ is a non-unit fraction.
  Numerical results show that our algorithms achieved the required accuracy and were faster than other algorithms in several situations.
\end{abstract}

\section{Introduction}\label{sec:introduction}
We consider the computation of the matrix power $A^\alpha$ for a matrix $A\in\bbC^{n\times n}$ and a real number $\alpha\in\bbR$.
The matrix exponential is defined as $\exp(X) = I + X + X^2/2! + X^3/3! + \cdots$ $(X \in \bbC^{n\times n})$, and the matrix logarithm is defined as an inverse function of the matrix exponential, i.e., any matrix $Y\in\bbC^{n\times n}$ satisfying $\exp(Y) = A$ is a logarithm of $A$.
If all the eigenvalues of $Y$ lie in the strip $\{z\in\bbC \colon |\mathrm{Im}(z)| < \pi\}$, then $Y$ is called the principal logarithm and is denoted by $\log(A)$.
Then, the principal matrix power $A^\alpha$ is defined as follows:
\begin{align}
  A^\alpha = \exp(\alpha\log(A)).
\end{align}
It is known that $A^\alpha$ exists and is unique when all the eigenvalues of $A$ lie in the set $\{z\in\bbC\colon z\notin (-\infty,0]\}$; see, e.g., \cite{higham_schur-pade_2011}.
The matrix fractional power arises in several situations of computational science, e.g., fractional differential equations \cite{aceto_rational_2017,burrage_efficient_2012,szekeres_finite_2017}, lattice QCD calculation \cite{aoki_exact_2003}, and computation of the weighted matrix geometric mean \cite{fasi_computing_2018}.

In this study, we computed $A^\alpha$ using the double exponential (DE) formula \cite{takahasi_double_1974} for the integral representation:
\begin{align}\label{eq:ir_realaxis}
  A^\alpha = \frac{\sin(\alpha\pi)}{\alpha\pi}A \int_0^\infty (t^{1/\alpha}I+A)^{-1} \dd{t}
  \qquad (0<\alpha<1),
\end{align}
see, e.g., \cite[Eq. (1.4)]{higham_schur-pade_2011}.
Without loss of generality, we assume $0 < \alpha< 1$ in this work because for any $\alpha\in\bbR$, we can compute $A^\alpha =A^{\lfloor\alpha\rfloor}A^{\lfloor\alpha\rfloor - \alpha}$, where $\lfloor\alpha\rfloor\in\bbN$ and $\lfloor\alpha\rfloor - \alpha \in [0,1)$.
The DE formula transforms the integral \eqref{eq:ir_realaxis} into another integral suited for the trapezoidal rule.
In particular, the transformed integrand decays double exponentially, and the transformed interval is the infinite interval $(-\infty,\infty)$, see, Section 2.
Hence, it is necessary to truncate the infinite interval into a finite interval appropriately to utilize the DE formula.

In this paper, we first present an algorithm for computing $A^\alpha$ using the $m$-point DE formula.
To develop this algorithm, we analyzed the truncation error of the DE formula for \eqref{eq:ir_realaxis} and proposed a method for truncating the infinite interval into a finite interval within a given tolerance.
Then, we present an adaptive quadrature algorithm based on the first algorithm to achieve the required accuracy.
This adaptive quadrature algorithm employs an a posteriori error estimation technique discussed in \cite{cardoso_computation_2012}.
Subsequently, we analyzed the convergence rate of the DE formula for Hermitian positive definite (HPD) matrices and compared it with that of Gaussian quadrature.
The convergence rate analysis gives us an a prior estimate of the discretization error as Gaussian quadrature in \cite{fasi_computing_2018}.
Hence, for HPD matrices, the number of abscissas can be estimated on the basis of the error estimation.

Several computational methods for $A^\alpha$ have been proposed. Examples are the Schur--Pad\'e algorithm \cite{higham_schur-pade_2011,higham_improved_2013}, the Schur logarithmic algorithm \cite{iannazzo_schur_2013}, and quadrature-based algorithms \cite{aceto_rational_2017,cardoso_computation_2012,fasi_computing_2018,hale_computing_2008}.
Among them, quadrature-based algorithms have two advantages.
One advantage is that these algorithms can compute $A^\alpha\bmb$ $(\bmb\in\bbR^n)$ without computing $A^\alpha$ itself when $A$ is large and either sparse or structured.
The other is that these algorithms can be parallelized easily in the sense that the integrand can be computed independently on each abscissa.
See, \cite[Sect. 18]{trefethen_exponentially_2014} for more details.

Quadrature-based algorithms were developed based on the two integral representations of $A^\alpha$.
One representation is based on the Cauchy integral \cite[Eq.\ (1.1)]{higham_schur-pade_2011}, which has been used as a basis for several algorithms \cite{hale_computing_2008}.
These algorithms are specialized for the case where all the eigenvalues of $A$ lie on or near the positive real axis.
Their convergence rate is better than that of other quadrature-based algorithms, but they require complex arithmetic even if $A$ is a real matrix.
To the best of our knowledge, for a general matrix $A$, no algorithms based on the Cauchy integral are available because of the difficulty in selecting the integral path.

The other integral representation is \eqref{eq:ir_realaxis}.
Unlike the Cauchy integral, the integral representation \eqref{eq:ir_realaxis} allows us to avoid selecting the integral path and avoid complex arithmetic for real matrices.
In \cite{aceto_rational_2017,cardoso_computation_2012,fasi_computing_2018}, the integral \eqref{eq:ir_realaxis} was transformed into other integrals on $[-1,1]$, and the transformed integrals were computed with Gaussian quadrature.
For example, transformations $t(u)=(1+u)/(1-u)$ and $t(v)=(1-v)^{\alpha}/(1+v)^{\alpha}$ were considered.

There are two motivations for considering the DE formula.
First, the DE formula may converge faster than Gaussian quadrature. The DE formula is known for its effectiveness in dealing with integrals on (half) infinite intervals as well as integrals with endpoint singularities, see, e.g., \cite{mori_discovery_2005,trefethen_exponentially_2014}. Hence, the DE formula is often used when the convergence of Gaussian quadrature is slow.
Indeed, the convergence of Gaussian quadrature for \eqref{eq:ir_realaxis} will be slow when the condition number of $A$ is large and $\alpha$ is a non-unit fraction.
This slow convergence is indicated by our numerical results and analysis for HPD matrices in \cite{fasi_computing_2018}.

Second, the DE formula can be used as an efficient adaptive quadrature algorithm.
One of the problems of Gaussian quadrature is that it incurs high computational cost for an a posteriori error estimate.
For HPD matrices, one can use an a prior estimate technique that computes $\lambda^\alpha$ for $\lambda$ as the extreme eigenvalues of $A$.
This estimation incurs lower cost than the computation of the integrand in \eqref{eq:ir_realaxis}, see, \cite{fasi_computing_2018}.
For general matrices, one can use an a posteriori estimate technique that compares two approximations of $A^\alpha$ with a different number of abscissas \cite{cardoso_computation_2012}.
However, this estimation incurs high computational cost for Gaussian quadrature because the integrand must be computed on all the abscissas when computing a new approximation with different number of abscissas.
In contrast to Gaussian quadrature, the DE formula employs the trapezoidal rule.
The trapezoidal rule can reuse an approximation of mesh size $h$ for computing a new approximation of mesh size $h/2$.
Hence, the DE formula reduces the computational cost.

The rest of this paper is organized as follows.
In the next section, a brief review of the DE formula is given.
In Section 3, the truncation error of the DE formula for $A^\alpha$ is analyzed, and the truncation method and the algorithms are presented.
In Section 4, the convergence rate of the DE formula is analyzed and compared with that of Gaussian quadrature.
Numerical results are shown in Section 5, and conclusions are given in Section 6.

\textbf{Notation:}
Unless otherwise stated, throughout this paper, $\|\cdot\|$ denotes a consistent matrix norm, e.g., $p$-norm and Frobenius norm, while $\|\cdot\|_2$ represents the 2-norm.
The symbol $\kappa(A)$ indicates the condition number, i.e., $\kappa(A) = \|A\|_2\|A^{-1}\|_2$, and $\rho(A)$ is the spectral radius of $A$, i.e., $|\lambdamax|$ for the largest absolute eigenvalue of $A$, $\lambdamax$.
The inverse of $\sinh$ is denoted by $\asinh$.
The strip of width $2d$ about the real axis is denoted by $\calD_d\coloneqq \{z\in\bbC \colon |\rmIm(z)|<d\}$.

\section{DE formula}
The DE formula exploits the fact that the trapezoidal rule for the integrals of analytic functions on the real line converges exponentially---see, e.g., \cite{mori_discovery_2005,takahasi_double_1974}.
The procedure based on the DE formula for a given integral $\int_a^b f(t) \dd{t}$, where $f$ is  a scalar function, is as follows.
\begin{enumerate}
  \item Apply a change of variables $t=t(x)$ to the integral, where the transformed interval is the infinite interval $(-\infty,\infty)$ and the transformed integrand $t'(x) f(t(x))$ decays  double exponentially as $x\to\pm\infty$.\footnote{%
    More strictly, the expression ``decays double exponentially'' means that $t'(x) f(t(x))$ is a function in the Hardy space $\mathrm{H}^\infty(\calD_d, \omega)$, where $\omega$ is a function satisfying the conditions in \cite[Thm.\ 3.2]{sugihara_optimality_1997} and $\mathrm{H}^\infty(\calD_d, \omega)$ is a function space defined in \cite[Sect. 2]{sugihara_optimality_1997}.
  }
  \item Truncate the infinite interval into a finite interval $[l,r]$.
  \item Compute $\int_l^r t'(x)f(t(x)) \dd{x}$ by the trapezoidal rule.
\end{enumerate}

If $f$ is analytic on $(a,b)$ and the change of variables is appropriate, the transformed integrand $t'(x)f(x)$ is analytic on $(-\infty,\infty)$ even if $f(t)$ is not analytic at $t=a,b$.
Thus, the DE formula can treat nearly arbitrary endpoint singularities.
In addition, the transformation with double exponential decay of the transformed integrand has a certain optimality property, see, e.g., \cite{mori_discovery_2005,sugihara_optimality_1997}.
Therefore, the DE formula can be used for computing integrals on the (half) infinite interval.

For the integral \eqref{eq:ir_realaxis}, one can apply the double exponential formula.
The integrand in \eqref{eq:ir_realaxis} is a matrix function of $A$ in the sense of \cite[Def.\ 1.2]{higham_functions_2008}, and therefore we have the explicit expressions of each element.
Considering the Jordan decomposition, each element of the integrand is 0 or in the form $\frac{\partial^k}{\partial t^k} (t^\alpha + \lambda)^{-1}$ for $k \ge 0$ and $\lambda$ being the eigenvalues of $A$.
Thus, with transformations such as $t(x) = \exp(\alpha\pi\sinh(x)/2)$, each element of the integrand in \eqref{eq:ir_realaxis} decays double exponentially as $x\to\infty$, and one can compute integrals simultaneously.

\section{Algorithms based on the DE formula}\label{sec:2:algorithms}
This section presents two algorithms for computing $A^\alpha$ based on the DE formula.
Herein, the change of variables $t(x) = \exp(\alpha\pi\sinh(x)/2)$ is considered for the DE formula.
Thus, the algorithms are based on the integral representation as follows:
\begin{align}
  A^\alpha = \int_{-\infty}^\infty \Fde(x) \dd{x},
\end{align}
where
\begin{align}\label{eq:def_f_fde}
  \Fde(x) = t'(x)F(t(x)),
  \qquad
  F(t) = \frac{\sin(\alpha\pi)}{\alpha\pi}A (t^{1/\alpha}I+A)^{-1}.
  % \qquad t(x) \coloneqq \exp\left(\frac{\alpha\pi}{2}\sinh(x)\right).
\end{align}
The first algorithm computes $A^\alpha$ on the basis of the $m$-point DE formula.
To control the truncation error, the algorithm selects a finite interval $[l,r]$ satisfying
\begin{align}\label{eq:de_err_goal}
  \left\lVert
    \int_{-\infty}^{\infty} \Fde(x) \dd{x}
    - \int_l^r \Fde(x) \dd{x}
  \right\rVert_2
  \le \frac{\epsilon}{2}
\end{align}
for a given tolerance $\epsilon$.
Then, $\int_l^r \Fde(x) \dd{x}$ is computed by using the $m$-point trapezoidal rule.
The second algorithm adaptively computes $\int_l^r \Fde(x) \dd{x}$ until the discretization error is smaller than or equal to $\epsilon/2$.
Hence, the total error of the second algorithm is smaller than or equal to $\epsilon$.

The analysis of the truncation error for a given finite interval is shown in Section \ref{sec:2-1:est-terr}.
In Section \ref{sec:2-2:set-int}, we propose a truncation method for \eqref{eq:de_err_goal}.
We explain the algorithms in Section \ref{sec:2-3:algorithm}.

\subsection{Truncation error analysis}
\label{sec:2-1:est-terr}
The truncation error is bound as follows:
\begin{lemma}\label{thm:abserr_lr}
  Let $A$ have no eigenvalues on the closed negative real axis and $\alpha\in(0,1)$.
  For a given interval $[l,r]$, let $a=\exp(\alpha\pi\sinh(l)/2)$ and $b=\exp(\alpha\pi\sinh(r)/2)$.
  For $a\le (2\|A^{-1}\|)^{-\alpha}$ and $b\ge (2\|A\|)^\alpha$, we have
  \begin{align}\label{eq:abserr}
    & \norm{
      \int_{-\infty}^{\infty} \Fde(x) \dd{x} - \int_l^r \Fde(x) \dd{x}
      }\\
    & \quad \le \frac{\sin(\alpha\pi)[\alpha+(1+\alpha)\|I\|]}{\alpha\pi(1+\alpha)}a
      + \frac{\sin(\alpha\pi)(3-2\alpha)\|A\|}{\pi(1-\alpha)(2-\alpha)} b^{1-1/\alpha}.
  \end{align}
\end{lemma}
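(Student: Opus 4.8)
The plan is to undo the double-exponential change of variables so that the truncation error becomes $\int_0^a F(t)\dd t+\int_b^\infty F(t)\dd t$ in the original variable $t$, and then to bound the two tails separately by expanding the resolvent $A(t^{1/\alpha}I+A)^{-1}$ in a Neumann series adapted to each regime of $t$. Since $t(x)=\exp(\alpha\pi\sinh(x)/2)$ is a smooth increasing bijection of $(-\infty,\infty)$ onto $(0,\infty)$ with $t(l)=a$, $t(r)=b$, and $\Fde(x)=t'(x)F(t(x))$ by \eqref{eq:def_f_fde}, the substitution $t=t(x)$ gives $\int_l^r\Fde(x)\dd x=\int_a^bF(t)\dd t$ and $\int_{-\infty}^\infty\Fde(x)\dd x=\int_0^\infty F(t)\dd t=A^\alpha$, the last equality being \eqref{eq:ir_realaxis}. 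Hence the left-hand side of \eqref{eq:abserr} equals $\norm{\int_0^aF(t)\dd t+\int_b^\infty F(t)\dd t}$, which by the triangle inequality is at most $\norm{\int_0^aF(t)\dd t}+\norm{\int_b^\infty F(t)\dd t}$; here $F$ is continuous on $[0,\infty)$ because $A$, and hence $t^{1/\alpha}I+A$, is invertible for every $t\ge0$ (no eigenvalue on $(-\infty,0]$).

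For the piece near $t=0$ I would use the identity $A(t^{1/\alpha}I+A)^{-1}=I-E(I+E)^{-1}$ with $E\coloneqq t^{1/\alpha}A^{-1}$. The hypothesis $a\le(2\norm{A^{-1}})^{-\alpha}$ is equivalent to $a^{1/\alpha}\norm{A^{-1}}\le 1/2$, so $\norm E=t^{1/\alpha}\norm{A^{-1}}\le 1/2<1$ on $[0,a]$, and the Neumann series yields $\norm{E(I+E)^{-1}}=\bigl\lVert\sum_{k\ge1}(-1)^{k-1}E^k\bigr\rVert\le\norm E/(1-\norm E)\le 2\norm E$. Consequently $\int_0^a\norm{E(I+E)^{-1}}\dd t\le 2\norm{A^{-1}}\int_0^a t^{1/\alpha}\dd t=\frac{2\alpha}{1+\alpha}\,a\,\bigl(a^{1/\alpha}\norm{A^{-1}}\bigr)\le\frac{\alpha}{1+\alpha}a$; adding the contribution $\int_0^a\norm I\dd t=\norm I\,a$ of the identity term and multiplying by $\sin(\alpha\pi)/(\alpha\pi)$ reproduces the first term on the right of \eqref{eq:abserr}.

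For the piece near $t=\infty$ the relevant identity is $A(t^{1/\alpha}I+A)^{-1}=G-G^2(I+G)^{-1}$ with $G\coloneqq t^{-1/\alpha}A$. Now $b\ge(2\norm A)^\alpha$ is equivalent to $b^{-1/\alpha}\norm A\le 1/2$, so $\norm G=t^{-1/\alpha}\norm A\le 1/2$ on $[b,\infty)$ and $\norm{G^2(I+G)^{-1}}=\bigl\lVert\sum_{k\ge2}(-1)^kG^k\bigr\rVert\le\norm G^2/(1-\norm G)\le 2\norm G^2$. Both $\int_b^\infty t^{-1/\alpha}\dd t$ and $\int_b^\infty t^{-2/\alpha}\dd t$ converge because $0<\alpha<1$ makes $1/\alpha,2/\alpha>1$, and they equal $\frac{\alpha}{1-\alpha}b^{1-1/\alpha}$ and $\frac{\alpha}{2-\alpha}b^{1-2/\alpha}$. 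Hence $\int_b^\infty\norm G\dd t=\frac{\alpha}{1-\alpha}\norm A\,b^{1-1/\alpha}$ and $\int_b^\infty\norm{G^2(I+G)^{-1}}\dd t\le\frac{2\alpha}{2-\alpha}\norm A^2 b^{1-2/\alpha}\le\frac{\alpha}{2-\alpha}\norm A\,b^{1-1/\alpha}$, the last step using $b^{-1/\alpha}\norm A\le 1/2$ once more. Summing these, multiplying by $\sin(\alpha\pi)/(\alpha\pi)$, and using $\frac1{1-\alpha}+\frac1{2-\alpha}=\frac{3-2\alpha}{(1-\alpha)(2-\alpha)}$ gives the second term on the right of \eqref{eq:abserr}; combining it with the estimate of the previous paragraph would finish the argument.

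The one step that is not purely mechanical is choosing the two resolvent identities — $A(t^{1/\alpha}I+A)^{-1}=I-E(I+E)^{-1}$ near $t=0$ and $A(t^{1/\alpha}I+A)^{-1}=G-G^2(I+G)^{-1}$ near $t=\infty$ — so that the sharp Neumann bound $\norm{E(I+E)^{-1}}\le\norm E/(1-\norm E)$, rather than a cruder bound carrying a spurious extra factor $\norm I$, is exactly what produces the stated constants. After that the two hypotheses on $a$ and $b$ are used only to guarantee $\norm E,\norm G\le 1/2$ and to trade the leftover powers of $a$, $b$ and of $\norm{A^{\pm1}}$ for the final dependence on $a$ and $b^{1-1/\alpha}$.
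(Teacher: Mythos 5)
Your proof is correct and follows essentially the same route as the paper: a Neumann series expansion of the resolvent at each end of the interval, with the hypotheses on $a$ and $b$ ensuring the expansion parameter has norm at most $1/2$, and the leading term split off so that the remainder sums to the stated constants. The only cosmetic difference is that you bound the series remainder pointwise (as $E(I+E)^{-1}$ and $G^2(I+G)^{-1}$) before integrating, whereas the paper integrates term by term and then bounds the resulting numerical series; both yield identical constants.
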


\begin{proof}
  From the triangle inequality, the truncation error is bounded by
  \begin{align}
    \label{eq:de_truncationerr}
    &\left\lVert
      \int_{-\infty}^{\infty} \Fde(x) \dd{x}
      - \int_l^r \Fde(x) \dd{x}
    \right\rVert\\
    &\quad \le
    \left\lVert \int_{-\infty}^l \Fde(x) \dd{x} \right\rVert
    + \left\lVert \int_r^{\infty} \Fde(x) \dd{x} \right\rVert.
  \end{align}
  We prove Lemma \ref{thm:abserr_lr} by showing
  \begin{align}\label{eq:err_za}
    \norm{
      \int_{-\infty}^{l} \Fde(x) \dd{x}
    }
    = \norm{
      \int_0^a F(t) \dd{t}
    }
    \le \frac{\sin(\alpha\pi)[\alpha+(1+\alpha)\|I\|]}{\alpha\pi(1+\alpha)}a
  \end{align}
  and
  \begin{align}\label{eq:err_bi}
    \norm{
      \int_r^\infty \Fde(x) \dd{x}
    }
    = \norm{
      \int_b^\infty F(t) \dd{t}
    }
    \le \frac{\sin(\alpha\pi)(3-2\alpha)\|A\|}{\pi(1-\alpha)(2-\alpha)} b^{1-1/\alpha}.
  \end{align}

  First, we show \eqref{eq:err_za}.
  For all $t \in [0,a]$ where $a\le(2\|A^{-1}\|)^{-\alpha}$, $t\le (2\|A^{-1}\|)^{-\alpha}$, and therefore, $\|t^{1/\alpha}A^{-1}\| \le 1/2~(<1)$.
  By applying the Neumann series expansion to $(t^{1/\alpha}I+A)^{-1}$, we get the following:
  \begin{align}
    (t^{1/\alpha}I+A)^{-1}
    = A^{-1}[I-(-t^{1/\alpha}A^{-1})]^{-1}
    = A^{-1} \sum_{k=0}^\infty (-1)^k A^{-k} t^{k/\alpha}.
  \end{align}
  Therefore, the integral can be rewritten as follows:
  \begin{align}\label{eq:use_WMtest}
    & A\int_0^a (t^{1/\alpha}I+A)^{-1} \dd{t}
      = A\int_0^a A^{-1} \left[\sum_{k=0}^\infty (-1)^k A^{-k} t^{k/\alpha}\right]\dd{t}\\
    & \quad = \sum_{k=0}^\infty (-1)^kA^{-k}\left[\int_0^a t^{k/\alpha} \dd{t}\right]
      = \sum_{k=0}^\infty (-1)^kA^{-k}\left(\frac{1}{k/\alpha+1}a^{k/\alpha+1}\right)\\
    & \quad = aI + \sum_{k=1}^\infty (-1)^kA^{-k}\left(\frac{1}{k/\alpha+1}a^{k/\alpha+1}\right).
  \end{align}
  Thus, we get the following:
  \begin{align}\label{eq:tmp01}
    & \norm{\int_0^a F(t) \dd{t}}
      = \norm{\frac{\sin(\alpha\pi)}{\alpha\pi}A\int_0^a (t^{1/\alpha}I+A)^{-1} \dd{t}}\\
    & \quad = \frac{\sin(\alpha\pi)}{\alpha\pi} \norm{aI + \sum_{k=1}^\infty (-1)^kA^{-k}\left(\frac{1}{k/\alpha+1}a^{k/\alpha+1}\right)}\\
    & \quad \le \frac{\sin(\alpha\pi)}{\alpha\pi} \left[
      a\norm{I}
      + \sum_{k=1}^\infty \norm{A^{-1}}^{k}\left(\frac{1}{k/\alpha+1}a^{k/\alpha+1}\right)
    \right].
  \end{align}
  Furthermore, from $a\le (2\|A^{-1}\|)^{-\alpha}$, we have the following: 
  \begin{align}
    & \sum_{k=1}^\infty \norm{A^{-1}}^{k}\left(\frac{1}{k/\alpha+1}a^{k/\alpha+1}\right)
      = \sum_{k=1}^\infty \frac{\alpha a}{k+\alpha} \norm{a^{1/\alpha} A^{-1}}^k\\
    & \quad \le \sum_{k=1}^\infty \frac{\alpha a}{1+\alpha} \left(\frac{1}{2}\right)^k
      =\frac{\alpha}{1+\alpha}a. \label{eq:tmp02}
  \end{align}
  We get \eqref{eq:err_za} by substituting \eqref{eq:tmp02} in \eqref{eq:tmp01}.

  Next, we show \eqref{eq:err_bi}.
  The outline of this proof is almost the same as that of \eqref{eq:err_za}.
  For all $t\in[b,\infty)$ with $b\ge (2\|A\|)^\alpha$, $\|t^{-1/\alpha}A\| \le 1/2 ~ (<1)$.
  By applying the Neumann series expansion to $(t^{1/\alpha}I+A)^{-1}$, we get
  \begin{align}
    (t^{1/\alpha}I+A)^{-1}
    = t^{-1/\alpha}[I-(-t^{-1/\alpha}A)]^{-1}
    = t^{-1/\alpha}\sum_{k=0}^\infty (-1)^kt^{-k/\alpha} A^k.
  \end{align}
  Therefore, the integral can be rewritten as follows:
  \begin{align}
    & \int_b^\infty(t^{1/\alpha}I+A)^{-1}\dd{t}
      =\int_b^\infty\left[t^{-1/\alpha}\sum_{k=0}^\infty (-1)^kt^{-k/\alpha} A^k\right]\dd{t}\\
    &\quad =\sum_{k=0}^\infty\left[(-1)^kA^k\int_b^\infty t^{-(k+1)/\alpha} \dd{t}\right]
     =\sum_{k=0}^\infty(-1)^kA^k \frac{\alpha}{k+1-\alpha}b^{1-(k+1)/\alpha}\\
    &\quad =\frac{\alpha}{1-\alpha}b^{1-1/\alpha}I+\sum_{k=1}^\infty(-1)^kA^k \frac{\alpha}{k+1-\alpha}b^{1-(k+1)/\alpha}. \label{eq:tmp07}
  \end{align}
  Thus,
  \begin{align} \label{eq:tmp03}
    & \norm{\int_b^\infty F(t) \dd{t}} = \norm{\frac{\sin(\alpha\pi)}{\alpha\pi}A\int_b^\infty (t^{1/\alpha} I+A)^{-1} \dd{t}}\\
    &\quad = \norm{\frac{\sin(\alpha\pi)}{\alpha\pi}\left[
      \frac{\alpha}{1-\alpha}b^{1-1/\alpha}A
      + A\sum_{k=1}^\infty(-1)^kA^k \frac{\alpha}{k+1-\alpha}b^{1-(k+1)/\alpha}
      \right]} \\
    % &\quad  \le \frac{\sin(\alpha\pi)}{\alpha\pi}\left[
    %   \norm{\frac{\alpha}{1-\alpha}b^{1-1/\alpha}A}
    %   + \|A\|\norm{\sum_{k=1}^\infty(-1)^kA^k \frac{\alpha}{k+1-\alpha}b^{1-(k+1)/\alpha}}
    %   \right] \\
    &\quad  \le\frac{\sin(\alpha\pi)}{\pi}\|A\|\left[
      \frac{1}{1-\alpha}b^{1-1/\alpha}
      + \sum_{k=1}^\infty \|A\|^k \frac{1}{k+1-\alpha}b^{1-(k+1)/\alpha}
      \right].
  \end{align}
  % For the second term in the bracket on the RHS of \eqref{eq:tmp03}, we have the following:
  Moreover, from $b \ge (2\|A\|)^\alpha$, we get the following:
  \begin{align} \label{eq:tmp04}
    &\sum_{k=1}^\infty \|A\|^k \frac{1}{k+1-\alpha}b^{1-(k+1)/\alpha}
      = b^{1-1/\alpha}\sum_{k=1}^\infty \frac{1}{k+1-\alpha} \|b^{-1/\alpha} A\|^k\\
    &\quad \le b^{1-1/\alpha} \sum_{k=1}^\infty \frac{1}{2-\alpha} \left(\frac{1}{2}\right)^k
    =\frac{1}{2-\alpha}b^{1-1/\alpha}.
  \end{align}
  We obtain \eqref{eq:err_bi} by substituting \eqref{eq:tmp04} in \eqref{eq:tmp03}.

  In conclusion, by combining \eqref{eq:de_truncationerr}, \eqref{eq:err_za} and \eqref{eq:err_bi}, we get \eqref{eq:abserr}.
\end{proof}

\begin{remark}
The technique of applying the Neumann series expansion to the integrand was considered in \cite[Thm.\ 2.1]{cardoso_computation_2012}, and an upper bound was derived as follows:
  \begin{align}%\label{eq:err_cardoso}
    \norm{\int_b^\infty (t^{1/\alpha}I+A)^{-1} \dd{t}} \le \frac{2b^{1-1/\alpha}}{1/\alpha - 1}.
  \end{align}
  Hence, the following relation holds:
  \begin{align}\label{eq:err_cardoso}
    \norm{\int_b^\infty F(t) \dd{t}} \le \frac{2\sin(\alpha\pi)\|A\|}{\pi(1-\alpha)} b^{1-1/\alpha}.
  \end{align}
  Note that the upper bound \eqref{eq:err_bi} is smaller than \eqref{eq:err_cardoso} because
  \begin{align}
    &\frac{2\sin(\alpha\pi)\|A\|}{\pi(1-\alpha)} b^{1-1/\alpha}
    - \frac{\sin(\alpha\pi)(3-2\alpha)\|A\|}{\pi(1-\alpha)(2-\alpha)} b^{1-1/\alpha}\\
    &\quad = \frac{\sin(\alpha\pi)\|A\|}{\pi(1-\alpha)} b^{1-1/\alpha} \left(2-\frac{3-2\alpha}{2-\alpha}\right)
    = \frac{\sin(\alpha\pi)\|A\|}{\pi(1-\alpha)} b^{1-1/\alpha} \frac{1}{2-\alpha}
    >0.
  \end{align}
  This difference originates from the separation of the sum into two terms in \eqref{eq:tmp07}.
\end{remark}

\subsection{Truncation method based on the error analysis}\label{sec:2-2:set-int}
On the basis of Lemma \ref{thm:abserr_lr}, we show a method to truncate the infinite interval into a finite interval within the given tolerance below.
\begin{proposition}\label{thm:setinterval}
For given $\epsilon>0$, let
\begin{align}
  l = \asinh\left(\frac{2\log(a)}{\alpha\pi}\right),\qquad r = \asinh\left(\frac{2\log(b)}{\alpha\pi}\right),
\end{align}
where
\begin{align}
  &a= \min\left\{
    \frac{\epsilon}{4}
    \frac{
      \pi\alpha(1+\alpha)
    }{
      \sin(\alpha\pi)(1+2\alpha)
    },(2\|A^{-1}\|_2)^{-\alpha}
  \right\},\\
  &b = \max\left\{
    \left[
      \frac{\epsilon}{4}
      \frac{
        \pi(1-\alpha)(2-\alpha)
      }{
        \sin(\alpha\pi)(3-2\alpha)\norm{A}_2
      }
  \right]^{\alpha/(\alpha-1)},
  (2\|A\|_2)^\alpha
  \right\}.
\end{align}
Then, the following holds: 
\begin{align}\label{eq:relerr}
  \norm{\int_{-\infty}^\infty \Fde(x) \dd{x} - \int_l^r \Fde(x) \dd{x}}_2 \le \frac{\epsilon}{2}.
\end{align}
\end{proposition}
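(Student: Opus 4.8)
The plan is to reduce everything to Lemma~\ref{thm:abserr_lr}: I would apply that lemma with the Euclidean norm and then choose $[l,r]$ so that each of the two terms on the right-hand side of \eqref{eq:abserr} is bounded by $\epsilon/4$, giving $\epsilon/2$ in total.

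First I would check that the interval $[l,r]$ defined in the statement meets the hypotheses of Lemma~\ref{thm:abserr_lr}. The choice $l=\asinh(2\log(a)/(\alpha\pi))$ gives $\sinh(l)=2\log(a)/(\alpha\pi)$, hence $\exp(\alpha\pi\sinh(l)/2)=a$, and likewise $\exp(\alpha\pi\sinh(r)/2)=b$; so the quantities called $a,b$ in the lemma coincide with the $a,b$ fixed here. Since $a$ is a minimum one of whose entries is $(2\norm{A^{-1}}_2)^{-\alpha}$ and $b$ is a maximum one of whose entries is $(2\norm{A}_2)^\alpha$, the required inequalities $a\le(2\norm{A^{-1}}_2)^{-\alpha}$ and $b\ge(2\norm{A}_2)^\alpha$ hold. (All the constants involved are positive for $\epsilon>0$ and $\alpha\in(0,1)$, so $a,b>0$ and the logarithms are defined; moreover $(2\norm{A}_2)^\alpha(2\norm{A^{-1}}_2)^\alpha=(4\kappa(A))^\alpha>1$ forces $a<b$, hence $l<r$ since $\asinh$ and $\log$ are increasing.) Taking $\norm{\cdot}=\norm{\cdot}_2$ in \eqref{eq:abserr} and using $\norm{I}_2=1$, so that $\alpha+(1+\alpha)\norm{I}_2=1+2\alpha$, the truncation error is bounded by
\begin{align}
  \frac{\sin(\alpha\pi)(1+2\alpha)}{\alpha\pi(1+\alpha)}\,a
  + \frac{\sin(\alpha\pi)(3-2\alpha)\norm{A}_2}{\pi(1-\alpha)(2-\alpha)}\,b^{1-1/\alpha}.
\end{align}

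Next I would show each summand is at most $\epsilon/4$. For the first summand this is precisely the inequality $a\le\frac{\epsilon}{4}\cdot\frac{\pi\alpha(1+\alpha)}{\sin(\alpha\pi)(1+2\alpha)}$, which is one of the two arguments defining the minimum $a$. For the second summand, note that $1-1/\alpha<0$ for $\alpha\in(0,1)$, so $t\mapsto t^{1-1/\alpha}$ is decreasing; consequently $b^{1-1/\alpha}\le\frac{\epsilon}{4}\cdot\frac{\pi(1-\alpha)(2-\alpha)}{\sin(\alpha\pi)(3-2\alpha)\norm{A}_2}$ is equivalent, after raising both sides to the negative power $1/(1-1/\alpha)=\alpha/(\alpha-1)$ (which reverses the inequality), to $b$ being at least the first argument of the maximum defining $b$. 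Adding the two $\epsilon/4$ estimates yields \eqref{eq:relerr}.

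The argument is essentially bookkeeping layered on Lemma~\ref{thm:abserr_lr}; the one step that needs genuine care is the second summand, where the exponent $1-1/\alpha$ is negative, so the relevant threshold for $b$ is a \emph{lower} bound (which is why $b$ is chosen as a maximum) and the exponent appears inverted as $\alpha/(\alpha-1)$. The passage from the general consistent norm in the lemma to the $2$-norm is harmless, since any consistent norm may be substituted into \eqref{eq:abserr} and $\norm{I}_2=1$ merely simplifies the first constant.
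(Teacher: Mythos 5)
Your proposal is correct and follows essentially the same route as the paper: verify that $a=\exp(\alpha\pi\sinh(l)/2)$ and $b=\exp(\alpha\pi\sinh(r)/2)$ satisfy the hypotheses of Lemma~\ref{thm:abserr_lr} by construction, specialize to the $2$-norm (so that $\alpha+(1+\alpha)\norm{I}_2=1+2\alpha$), and bound each of the two resulting terms by $\epsilon/4$ using the defining inequalities for $a$ and $b$. Your extra remarks on the sign of the exponent $1-1/\alpha$ and on $l<r$ are correct bookkeeping that the paper leaves implicit.
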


\begin{proof}
From the definitions of $a$ and $b$, $a\le (2\|A^{-1}\|_2)^{-\alpha}$ and $b \ge (2\|A\|_2)^\alpha$ are true.
Therefore, the interval $[l,r]$ satisfies the assumptions for Lemma \ref{thm:abserr_lr}.
Hence, it follows that
\begin{align}\label{eq:tmp05}
  & \norm{\int_{-\infty}^\infty \Fde(x) \dd{x} - \int_l^r \Fde(x)\dd{x}}_2\\
  & \quad \le \frac{\sin(\alpha\pi)(1+2\alpha)}{\alpha\pi(1+\alpha)}a
  + \frac{\sin(\alpha\pi)(3-2\alpha)\|A\|_2}{\pi(1-\alpha)(2-\alpha)} b^{1-1/\alpha}.
\end{align}
From the definition of $a$ and $b$, we have
\begin{align}\label{eq:tmp06}
  \frac{\sin(\alpha\pi)(1+2\alpha)}{\alpha\pi(1+\alpha)}a \le \frac{\epsilon}{4},
  \qquad
  \frac{\sin(\alpha\pi)(3-2\alpha)\|A\|_2}{\pi(1-\alpha)(2-\alpha)} b^{1-1/\alpha}\le \frac{\epsilon}{4}.
\end{align}
We obtain \eqref{eq:relerr} by substituting \eqref{eq:tmp06} in \eqref{eq:tmp05}.
\end{proof}

\subsection{Algorithms}\label{sec:2-3:algorithm}
We first explain the algorithm for computing $A^\alpha$ on the basis of the $m$-point DE formula.
This algorithm computes $[l,r]$ that satisfies \eqref{eq:relerr} on the basis of Proposition \ref{thm:setinterval} and computes $\int_l^r \Fde(x) \dd{x}$ by the $m$-point trapezoidal rule.
The details of the algorithm are given in Algorithm \ref{alg:1}.

\begin{algorithm}
  \caption{$m$-point DE formula for computing $A^\alpha$}
  \begin{algorithmic}[1]
    \Statex \textbf{Input} $A$, $\alpha\in(0,1)$, $\epsilon>0$, $m$.
    \State $l, r = \texttt{GetInterval}(A,\alpha,\epsilon)$
    \State Set $\tildeFde(x) = \exp(\alpha\pi\sinh(x)/2)\cosh(x)\left[\exp(\pi\sinh(x)/2)I+A \right]^{-1}$
    \State $h = (r - l) / (m - 1)$
    \State $T = h[\tildeFde(l) + \tildeFde(r)]/2 + h\sum_{k=1}^{m-2}\tildeFde(l+kh)$
    \Statex \textbf{Output} $\sin(\alpha\pi) AT/2 \approx A^\alpha$
    \Statex
    \Function{GetInterval}{$A$, $\alpha$, $\epsilon$}
      \State Compute $\norm{A}_2$, $\|A^{-1}\|_2$
      \State $a_1 = [\alpha\pi(1+\alpha)\epsilon]/[4\sin(\alpha\pi)(1+2\alpha)], \quad a_2 = (2\|A^{-1}\|_2)^{-\alpha}$
      \State $a = \min\{a_1, a_2\}$
      \State $b_1 =
        [
          \pi(1-\alpha)(2-\alpha) \epsilon
        ]^{\alpha/(\alpha-1)}
        /
        [
          4\sin(\alpha\pi)(3-2\alpha)\norm{A}_2
        ]^{\alpha/(\alpha-1)}
        ,
        \quad
        b_2 = (2\|A\|_2)^\alpha
      $
      \State $b = \max \{b_1, b_2\}$
      \State $l = \asinh(2\log(a)/\alpha\pi), \quad r = \asinh(2\log(b)/\alpha\pi)$
      \State \Return $l,r$
    \EndFunction
  \end{algorithmic}
  \label{alg:1}
\end{algorithm}

When the tolerance $\epsilon$ is sufficiently small, the accurate computation of $\|A\|_2$, $\|A^{-1}\|_2$ in Step 6 of Algorithm \ref{alg:1} is not necessary, because the errors originating from these values have little effect on the truncation error.
Assume that $\epsilon$ is sufficiently small, and $[a,b] = [a_1,b_1]$.
Let $\Delta\in\bbR$ be such that the computed 2-norm of $A$ is equal to $\|A\|_2(1+\Delta)$.
Then, the computed value of $b$ without rounding errors is
\begin{align}
  b = \left[
    \frac{
      \pi(1-\alpha)(2-\alpha)
    }{
      4\sin(\alpha\pi)(3-2\alpha)\norm{A}_2
    } \epsilon_b
  \right]^{\alpha/(\alpha-1)},
\end{align}
where $\epsilon_b = \epsilon/(1 + \Delta)$.
Hence, the upper bound of the truncation error is
\begin{align}
  \left\lVert
      \int_{-\infty}^{\infty} \Fde(x) \dd{x}
      - \int_{l}^{r} \Fde(x) \dd{x}
    \right\rVert
  \le \frac{1}{2}(\epsilon + \epsilon_b) \le \frac{1}{2}\epsilon \left(1 + \frac{1}{1-|\Delta|}\right).
\end{align}
For example, when $\Delta = 10^{-2}$, the upper bound of the truncation error is $1.005\epsilon$ at most.
Therefore, the effect of these errors will be negligible.
In practice, the effect of the error can be canceled by setting $\epsilon$ of Algorithm 1 to $\epsilon = \tilde{\epsilon} / (1 + 1/(1 - |\tilde{\Delta}|))$, where $\tilde {\epsilon}$ is the tolerance for the truncation error of the DE formula and $\tilde{\Delta}$ is the tolerance for the relative error used in the computation of $\|A\|_2$.

It may be desirable to apply bounds for the relative truncation error rather than for the absolute truncation error; in other words, one may wish to select $[l,r]$ that satisfies 
\begin{align}
  \frac{\norm{A^\alpha - \int_l^r \Fde(x)\dd{x}}_2}{\|A^\alpha\|_2} \le \frac{\tilde{\epsilon}}{2}
\end{align}
for a given tolerance $\tilde{\epsilon}$.
In this case, one can select such an interval by setting $\epsilon = \rho(A)^\alpha \tilde{\epsilon}$ for the input of Algorithm \ref{alg:1}.
This is because $\rho(A)^\alpha = \rho(A^\alpha)$ bounds $\|A^\alpha\|_2$ from below.
The relation $\rho(A)^\alpha = \rho(A^\alpha)$ can be verified from the fact that the eigenvalues of $A^\alpha$ are $\lambda_i^\alpha$, where $\lambda_i$ are the eigenvalues of $A$, see, e.g., \cite[Thm.\ 1.13]{higham_functions_2008}.

Before explaining the adaptive quadrature algorithm, we consider an a posteriori estimation of the discretization error.
Let $m$ be the number of abscissas, $h=(r-l)/(m-1)$ is the mesh size, and $T(h)$ is the trapezoidal rule of the mesh size $h$:
\begin{align}
T(h) := \frac{h}{2} \left(F_{\mathrm{DE}}(l) + F_{\mathrm{DE}}(r)\right) + h \sum_{k=1}^{m-2} F_{\mathrm{DE}}(l+kh).
\end{align}
Then, $T(h/2)$ can be computed as follows:
\begin{align}
T\left(\frac{h}{2}\right) = \frac{1}{2}T(h) + \frac{h}{2}\sum_{k=1}^{m-1} F_{\mathrm{DE}}\left(l+(2k-1)\frac{h}{2}\right).
\end{align}
The upper bound on the discretization error of $T(h/2)$ can be obtained by applying the discussion in \cite[p.\ 424]{cardoso_computation_2012} to the trapezoidal rule.
Assume that $h$ is small enough to satisfy $\|T(h) - \int_l^r\Fde(x)\,\mathrm{d}x\|\le \mu$ and $\|T(h/2) - \int_l^r\Fde(x)\,\mathrm{d}x\|\le c\mu$, with $0 < c \le 1/2$.
If $\|T(h/2) - T(h)\| \le \tilde{\mu}$, then
\begin{align}
  \left\lVert T(h) - \int_l^r F_{\mathrm{DE}}(x)\,\mathrm{d}x \right\rVert
  \le \left\lVert T(h) - T\left(\frac{h}{2}\right) \right\rVert
  + \left\lVert T\left(\frac{h}{2}\right) - \int_l^r F_{\mathrm{DE}}(x)\,\mathrm{d}x \right\rVert,
\end{align}
that is, $\mu \le \tilde{\mu} + c\mu$, or equivalently, $\mu \le \tilde{\mu}/(1-c)$.
Hence,
\begin{align}\label{eq:trap_err_estimation}
  \left\lVert \int_l^r F_{\mathrm{DE}}(x)\,\mathrm{d}x - T\left(\frac{h}{2}\right) \right\rVert
  \le \left\lVert T(h) - T\left(\frac{h}{2}\right) \right\rVert.
\end{align}

Now we shall state the adaptive quadrature algorithm.
First, the algorithm computes $[l,r]$, as shown in Algorithm \ref{alg:1}.
Then, the algorithm repeats the computation of $\int_l^r \Fde(x) \dd{x}$ by increasing the number of abscissas until the discretization error is smaller than $\epsilon/2$.
The details of the algorithm are shown in Algorithm \ref{alg:2}.
\begin{algorithm}
  \caption{Adaptive quadrature algorithm for $A^\alpha$ based on the DE formula}
  \begin{algorithmic}[1]
    \Statex \textbf{Input} $A$, $\alpha\in(0,1)$, $\epsilon>0$, $m_0\ge 2$.
    \State $l, r = \texttt{GetInterval}(A,\alpha,\epsilon)$
    \Comment{\texttt{GetInterval} is defined in Algorithm \ref{alg:1}.}
    \State Set $\tildeFde(x) = \exp(\alpha\pi\sinh(x)/2)\cosh(x)\left[\exp(\pi\sinh(x)/2)I+A \right]^{-1}$
    \State $s = -1$
    \State $h_0 = (r-l) / (m_0 - 1)$
    \State $T_0 = h_0[\tildeFde(l) + \tildeFde(r)]/2 + h_0\sum_{k=1}^{m-2}\tildeFde(l+kh)$
    \Repeat
      \State $s = s+1$
      \State $h_{s+1} = h_s / 2$
      \State $T_{s+1} = T_s/2 + h_{s+1}\sum_{k=1}^{m_s-1} \tildeFde(l+(2k-1)h_{s+1})$
      \State $m_{s+1} = 2m_s - 1$
      
    \Until{$\sin(\alpha\pi)\|AT_{s+1} - AT_s\|/2 > \epsilon/2$}
    \State $T = T_{s+1}$
    \Statex \textbf{Output} $\sin(\alpha\pi) AT/2 \approx A^\alpha$
  \end{algorithmic}
  \label{alg:2}
\end{algorithm}

\begin{remark}
  The computational cost of Algorithm \ref{alg:1} for a dense matrix $A$ is about $(8m/3 + 2) n^3$ if the computational cost for computing norms $\|A\|_2$ and $\|A^{-1}\|_2$ is $\calO(n^2)$, and that of Algorithm \ref{alg:2} is about $(8m_{s+1}/3 + 2(s+1)) n^3$.
  Further, the cost incurred in computing $A^\alpha\bmb$ with Algorithm \ref{alg:1} is $mc_{\mathrm{abscissa}} + c_{\mathrm{mul}} + c_{\mathrm{norm}}$,
  where $c_{\mathrm{abscissa}}$ is the computational cost of computing $\tilde{F}_{\mathrm{DE}}(x)\bmb$,
  $c_{\mathrm{mul}}$ is that of a matrix-vector multiplication,
  and $c_{\mathrm{norm}}$ is that of computing norms $\|A\|_2$ and $\|A^{-1}\|_2$.
  The computational cost of Algorithm \ref{alg:2} is $m_{s+1}c_{\mathrm{abscissa}} + (s+1)c_{\mathrm{mul}}+ c_{\mathrm{norm}}$.
  Because one can use low-accuracy norms, the total computational cost will be almost proportional to the total number of abscissas.
\end{remark}

\section{Convergence of the DE formula for HPD matrices}\label{sec:3:convergence}
When $A$ is an HPD matrix, the analysis of a quadrature formula for $A^\alpha$ can be reduced to that of the quadrature formula for the scalar fractional power.
For the DE formula, the following relation holds: 
\begin{align}\label{eq:3-01}
  &\norm{\int_{-\infty}^\infty \Fde(x) \dd{x}-h\sum_{k=-\infty}^\infty \Fde(kh)}_2\\
  &\qquad = \max_{\lambda\in\Lambda(A)}\left|\int_{-\infty}^\infty \fde(x,\lambda) \dd{x}-h\sum_{k=-\infty}^\infty \fde(kh,\lambda)\right|,
\end{align}
where $h$ is mesh size, $\Lambda(A)$ is the spectrum of $A$, and $\fde$ is the scalar counterpart of $\Fde(x)$. Further, $\fde$ can be expressed as follows:
\begin{align}\label{eq:fde}
  \fde(x,\lambda)=\frac{\sin(\alpha\pi)\lambda}{2}  \frac{\exp(\alpha\pi\sinh(x)/2)\cosh(x)}{\exp(\pi\sinh(x)/2)+\lambda}.
\end{align}
Section \ref{subs:3-1_fde} presents our analysis of the discretization error of the trapezoidal rule for $\int_{-\infty}^\infty \fde \dd{x}$, i.e., the DE formula for $\lambda^\alpha$.
The error of the DE formula for $A^\alpha$ is discussed in Section \ref{subs:3-2_Fde}, and the DE formula is compared with Gaussian quadrature in Section \ref{subs:3-3_comparison}.

\subsection{Discretization error of the DE formula for \texorpdfstring{$\lambda^\alpha$}{the scalar fractional power}}\label{subs:3-1_fde}
First, we recall the following upper bound on the discretization error of the trapezoidal rule for an integral of an analytic function on the real axis:
\begin{theorem}[see, {\cite[Thm. 5.1]{trefethen_exponentially_2014}}]\label{thm:convergence_de}
  Suppose $g$ is analytic in the strip $\calD_d = \{z\in\bbC\colon |\rmIm(z)| < d\}$ for some $d>0$.
  Further, suppose that $g(z)\to 0$ uniformly as $|z|\to \infty$ in the strip, and for some $c$, it satisfies
  \begin{align}\label{eq:thm3.1condition}
    \int_{-\infty}^\infty |g(x+\rmi y)|\dd{x} \le c
  \end{align}
  for all $y\in(-d,d)$.
  Then, for any $h>0$, the sum $h\sum_{k=-\infty}^\infty g(kh)$ exists and satisfies
  \begin{align}
    \left|h\sum_{k=-\infty}^\infty g(kh) - \int_{-\infty}^\infty g(x)\dd{x}\right|
    \le \frac{2c}{\exp(2\pi d/h) - 1}.
  \end{align}
\end{theorem}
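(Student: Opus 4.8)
The plan is to prove Theorem~\ref{thm:convergence_de} via the Poisson summation formula, which rewrites the trapezoidal sum as a sum of Fourier-transform values and exhibits the quadrature error as a ``tail'' controlled entirely by the analyticity of $g$.

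First I would introduce the Fourier transform $\hat g(\xi)=\int_{-\infty}^\infty g(x)\rme^{-\rmi\xi x}\dd{x}$, which is well defined and continuous because hypothesis \eqref{eq:thm3.1condition} with $y=0$ makes $g$ absolutely integrable on the real axis. The analyticity of $g$ in $\calD_d$, together with the uniform decay $g(z)\to 0$ as $|z|\to\infty$ in the strip, lets me shift the contour in $\hat g(\xi)$ from the real axis to the horizontal line $\rmIm z=-b\,\mathrm{sign}(\xi)$ for any $0<b<d$: the two vertical connecting segments contribute nothing in the limit by the uniform decay, and Cauchy's theorem applies since $g$ is holomorphic in between. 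This gives $|\hat g(\xi)|\le \rme^{-b|\xi|}\int_{-\infty}^\infty|g(x-\rmi b\,\mathrm{sign}(\xi))|\dd{x}\le c\,\rme^{-b|\xi|}$, and letting $b\uparrow d$ yields $|\hat g(\xi)|\le c\,\rme^{-d|\xi|}$ for all real $\xi$.

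Next I would invoke the Poisson summation formula in the scaled form $h\sum_{k=-\infty}^\infty g(kh)=\sum_{m=-\infty}^\infty\hat g(2\pi m/h)$. The hypotheses guarantee its validity: $g$ is continuous and integrable, and the decay bound just established makes $\hat g$ integrable, which is more than enough for the standard proof and in particular makes both series converge absolutely (so the existence of the trapezoidal sum comes out as a byproduct). The $m=0$ term equals $\hat g(0)=\int_{-\infty}^\infty g(x)\dd{x}$, hence
\begin{align}
  \left| h\sum_{k=-\infty}^\infty g(kh) - \int_{-\infty}^\infty g(x)\dd{x} \right|
  = \left| \sum_{m\neq 0} \hat g\!\left(\frac{2\pi m}{h}\right) \right|
  \le \sum_{m\neq 0} c\,\rme^{-2\pi d|m|/h}
  = 2c\sum_{m=1}^\infty \rme^{-2\pi d m/h}.
\end{align}
Summing the geometric series gives $2c\,\rme^{-2\pi d/h}/(1-\rme^{-2\pi d/h})=2c/(\rme^{2\pi d/h}-1)$, which is the claimed estimate.

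I expect the main obstacle to be the rigorous justification of the two limiting arguments---the contour shift and the Poisson summation formula---under precisely the stated hypotheses rather than stronger ones: the \emph{uniform} (not merely pointwise) decay $g(z)\to 0$ in the strip is exactly what kills the vertical edges in the contour shift, and the uniform-in-$y$ bound \eqref{eq:thm3.1condition} is what keeps the shifted Fourier integrals bounded by $c$ independently of how close $b$ is to $d$, so that one obtains the clean constant $2c$ with $d$ itself rather than an $\calO(\rme^{-2\pi d'/h})$ statement for $d'<d$. An alternative route that avoids Poisson summation is a direct residue computation: integrate $g(z)/(\rme^{\mp 2\pi\rmi z/h}-1)$ around a tall rectangle inside $\calD_d$, picking up the lattice points $kh$ as simple poles, with the same two hypotheses controlling the horizontal edges; it leads to the identical bound. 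I would present the Poisson-summation version as the cleaner of the two.
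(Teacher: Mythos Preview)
The paper does not supply its own proof of this theorem: it is quoted verbatim from \cite[Thm.~5.1]{trefethen_exponentially_2014} and used as a black box, so there is no in-paper argument to compare against. Your proposal is correct and is in fact precisely the route taken in the cited source: Trefethen and Weideman derive the bound via Poisson summation, first establishing the exponential Fourier-transform decay $|\hat g(\xi)|\le c\,\rme^{-d|\xi|}$ by a contour shift inside the strip (this is their ``Paley--Wiener'' step), then isolating the $m=0$ term and summing the geometric tail. The residue-based alternative you mention is also discussed there. So your write-up would serve as a faithful reconstruction of the original proof rather than a comparison point.
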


To apply Theorem \ref{thm:convergence_de} to $\fde$, we need to identify the strip in which $\fde$ is analytic.
We identify such a region using the following proposition:
\begin{proposition}\label{thm:convergence_fde}
  For $\lambda>0$, $\fde(z,\lambda)$ defined in \eqref{eq:fde} is analytic in the strip $\calD_{d_0(\lambda)}$, where
  \begin{align}\label{eq:d0}
    d_0(\lambda) = \arcsin\left(
      \sqrt{\frac{(\log \lambda)^2+ 5\pi^2/4- \sqrt{[(\log \lambda)^2+ 5\pi^2/4]^2-\pi^4}}{\pi^2/2}}
    \right).
  \end{align}
\end{proposition}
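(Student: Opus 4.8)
The plan is to locate the singularities of $\fde(z,\lambda)$ in the complex plane and find the largest $d$ such that the strip $\calD_d$ is free of them. Since $\fde(z,\lambda)$ in \eqref{eq:fde} is the product of entire functions $\exp(\alpha\pi\sinh(z)/2)$ and $\cosh(z)$ divided by $\exp(\pi\sinh(z)/2)+\lambda$, the only possible singularities are the zeros of the denominator, i.e., the points $z$ where $\exp(\pi\sinh(z)/2) = -\lambda$. Writing $z = x+\rmi y$ and $\sinh(z) = \sinh(x)\cos(y) + \rmi\cosh(x)\sin(y)$, this equation splits into a modulus condition $\exp(\pi\sinh(x)\cos(y)/2) = \lambda$ (using $\lambda>0$) and a phase condition $\pi\cosh(x)\sin(y)/2 \equiv \pi \pmod{2\pi}$. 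The smallest positive phase solution is $\cosh(x)\sin(y) = 2$; larger odd multiples give singularities farther from the real axis, so the nearest singularity to $\bbR$ is governed by this relation together with $\sinh(x)\cos(y) = (2\log\lambda)/\pi$.

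The next step is to minimize $|y|$ (equivalently $\sin^2 y$, since we expect $|y|<\pi/2$ near the relevant branch) subject to the two constraints. From $\cosh(x)\sin(y)=2$ and $\sinh(x)\cos(y) = 2\log\lambda/\pi$, I would square and use $\cosh^2 x = 1+\sinh^2 x$ to eliminate $x$: set $s = \sin^2 y$, so $\cos^2 y = 1-s$ and $\cosh^2 x = 4/s$, hence $\sinh^2 x = 4/s - 1$, and the second constraint gives $(4/s-1)(1-s) = (2\log\lambda/\pi)^2 =: L$. Expanding yields a quadratic in $s$: $(4/s - 1 - 4 + s) = L$, i.e., $4/s + s - 5 = L$, so $s^2 - (5+L)s + 4 = 0$, giving $s = \big[(5+L) \pm \sqrt{(5+L)^2 - 16}\big]/2$. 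The relevant (smaller, giving smaller $|y|$) root is the one with the minus sign. Substituting $L = 4(\log\lambda)^2/\pi^2$ and simplifying: $5+L = (5\pi^2 + 4(\log\lambda)^2)/\pi^2 = 4[(\log\lambda)^2 + 5\pi^2/4]/\pi^2$, and $(5+L)^2 - 16 = 16\{[(\log\lambda)^2+5\pi^2/4]^2 - \pi^4\}/\pi^4$, so $s = 2\big\{[(\log\lambda)^2+5\pi^2/4] - \sqrt{[(\log\lambda)^2+5\pi^2/4]^2-\pi^4}\big\}/\pi^2$. Then $d_0(\lambda) = \arcsin(\sqrt{s})$ matches \eqref{eq:d0} exactly.

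A few points need care. I must verify that the chosen root $s$ lies in $(0,1)$ so that $\arcsin(\sqrt s)$ is well-defined and corresponds to a genuine $y\in(0,\pi/2)$; this follows from checking $(5+L)^2-16 > 0$ (true since $5+L > 5 > 4$) and that the product of the two roots is $4$ while their sum is $5+L > 4$, forcing the smaller root below $1$ and the larger above $4$ — so only the smaller root is admissible as $\sin^2 y$. I also need to confirm that there is actually a real $x$ solving $\sinh^2 x = 4/s - 1$, which requires $4/s \ge 1$, i.e., $s \le 4$, automatically satisfied. Finally, one should argue that no singularity with $|\rmIm z|$ strictly smaller than $d_0(\lambda)$ exists — i.e., that taking the phase equal to $\pi$ (rather than $3\pi, 5\pi, \dots$) genuinely gives the closest singularity; this is intuitively clear because a larger target for $\cosh(x)\sin(y)$ forces larger $|y|$ for any fixed $x$, but it deserves a sentence. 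The main obstacle is purely bookkeeping: carrying out the elimination of $x$ cleanly and matching the resulting radical to the somewhat opaque expression in \eqref{eq:d0}; there is no conceptual difficulty, only the risk of algebraic slips, so I would double-check the quadratic $s^2 - (5+L)s + 4 = 0$ by back-substitution.
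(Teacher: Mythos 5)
Your proposal is correct and follows essentially the same route as the paper's proof: locate the poles of $1/[\exp(\pi\sinh(z)/2)+\lambda]$, split $\exp(\pi\sinh(z)/2)=-\lambda$ into real and imaginary parts, eliminate $x$ via $\cosh^2 x=1+\sinh^2 x$, and solve the resulting quadratic in $\sin^2 y$ for the smaller root, which reproduces \eqref{eq:d0}. The only (minor) differences are that your substitution $\cosh^2 x=4/s$ avoids the paper's separate treatment of the case $\lambda=1$ (where $\cos y=0$), and that both your argument and the paper's leave the claim that $k=\pm 1$ gives the nearest pole at the level of a checkable remark rather than a full proof.
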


\begin{proof}
  We prove Proposition \ref{thm:convergence_fde} by calculating the imaginary part of a pole of $\fde$ that is the closest to the real axis among the poles of $\fde$.
  Because $\exp(\alpha\pi\sinh(z)/2)\cosh(z)$ is analytic in $\bbC$, we consider only the poles of $1/[\exp(\pi\sinh(z))/2)+\lambda]$.

  In fact, the poles are calculated exactly.
  Consider the following equation:
  \begin{align}\label{eq:3-06}
    \exp\left(\frac{2}{\pi}\sinh z\right) = -\lambda.
  \end{align}
  Let $x$ and $y$ be the real and imaginary parts of $z$ in \eqref{eq:3-06}, respectively.
  Then, \eqref{eq:3-06} can be rewritten as
  \begin{align}
    & \exp\left(\frac{\pi}{2}\sinh z\right)=\exp\left(\frac{\pi}{2}\sinh x\cos y+ \rmi\frac{\pi}{2}\cosh x \sin y \right), \label{eq:3-07}\\
    & -\lambda=\exp(\log\lambda +\rmi k\pi)\quad(k=\pm 1,\pm 3,\pm 5\dots). \label{eq:3-08}
  \end{align}
  From the right-hand side of \eqref{eq:3-07} and that of \eqref{eq:3-08}, it follows that
  \begin{align}\label{eq:3-09}
    \begin{cases}
      \pi\sinh(x)\cos(y)/2 = \log \lambda,\\
      \pi\cosh(x)\sin(y)/2 = k\pi.
    \end{cases}
  \end{align}
  To solve \eqref{eq:3-09}, we consider two cases: $\lambda$ is 1 or not.
  In case $\lambda = 1$, equation \eqref{eq:3-09} leads $\cos y = 0$.
  Therefore, the minimum absolute solution $y$ is $\pm\pi/2$.

  In case $\lambda\ne 1$, $\cos y \ne 0$ because $\log \lambda \ne 0$.
  After squaring the left and right of both equations of \eqref{eq:3-09}, we obtain
  \begin{align}
    \left(\frac{\pi^2}{4} + \frac{(\log\lambda)^2}{1-\sin^2y}\right) \sin^2 y = k^2\pi^2
    \label{eq:3-11}
  \end{align}
  by substituting the relations $\cosh^2 x  = 1 + \sinh^2 x$ and $\cos^2 y=1-\sin^2 y (\ne 0)$.
  Equation \eqref{eq:3-11} can be solved easily because \eqref{eq:3-11} is a quadratic equation for $\sin^2 y$.
  Thus, the imaginary part of the solutions are as follows:
  \begin{align} \label{eq:3-12}
    y_k = \arcsin\left(\sqrt{\frac{
      (\log \lambda)^2 + \pi^2/4 + k^2\pi^2 - \sqrt{[(\log \lambda)^2 + \pi^2/4 + k^2\pi^2]^2 - k^2\pi^4}
    }{\pi^2/2}}\right),
  \end{align}
  We can check that $|y_1| = |y_{-1}| \le |y_k|$ for all $|k|>1$.
  Therefore, the imaginary part of the closest poles are $\pm y_1$.

  Because $y_1 = \pi/2$ when $\lambda=1$, we can combine the two cases about $\lambda$.
  In conclusion, $\fde(z,\lambda)$ is analytic in $\calD_{d_0(\lambda)}$, where $d_0(\lambda) = y_1$.
\end{proof}

To apply Theorem \ref{thm:convergence_de} to $\fde$, we make sure that $\fde$ satisfies the condition \eqref{eq:thm3.1condition}.
Let $\delta$ be a sufficiently small positive number.
Then, $\fde$ is continuous and bounded in $\calD_{d_0(\lambda) - \delta}$.
In addition, by using the results in Appendix \ref{sec:app-calc}, we can show that there exist constants $c_l,c_r>0$ and $l, r\in\bbR$ such that for all $|y| \le d_0(\lambda) - \delta\; (< \pi/2)$,
\begin{align}
  &|\fde(x+\rmi y)| < c_l \exp \left(-\frac{\alpha\pi \cos(d_0(\lambda)-\delta)}{2} \sinh(|x|)\right) \cosh x \qquad (x < l),
  \label{eq:condition31l}\\
  &|\fde(x+\rmi y)| < c_r \exp\left(-\frac{(1-\alpha)\pi\cos(d_0(\lambda) - \delta)}{2} \sinh x\right) \cosh x
  \qquad (x > r).
  \label{eq:condition31r}
\end{align}
Therefore, $\fde$ satisfies condition \eqref{eq:thm3.1condition}.

To conclude, the discretization error of the DE formula for $\lambda^\alpha$ is estimated as
\begin{align}\label{eq:convrate_scde}
  \left|\lambda^\alpha - h\sum_{k=-\infty}^\infty \fde(kh,\lambda)\right|
  \le \calO \left(\exp\left(-\frac{2\pi d_0(\lambda)}{h}\right)\right).
\end{align}
The estimation \eqref{eq:convrate_scde} has a limitation in the sense that it only considers the convergence rate.
The error of the DE formula goes to 0 as $\lambda \to 0$ because the $\fde(x,\lambda) \to 0$.
However, \eqref{eq:convrate_scde} does not indicate it because $d_0(\lambda) \to 0$ as $\lambda \to 0$.

\subsection{Convergence of the DE formula for \texorpdfstring{$A^\alpha$}{the matrix fractional power}}\label{subs:3-2_Fde}
First, let us consider the eigenvalues of $A$ that determine the convergence rate of the DE formula for $A^\alpha$.
Since the error of the DE formula for $\lambda^\alpha$ is $\calO(\exp(-2\pi d_0(\lambda)/h))$, its convergence is slow when $|\log(\lambda)|$ is large, i.e., $\lambda$ is far away from $1$.
This is because $d_0(\lambda)$ decreases monotonically when $\lambda > 1$ and increases monotonically when $0<\lambda<1$.
In addition, the convergence rate of the DE formula for $\lambda^\alpha$ is approximately equal to that of the DE formula for $(1/\lambda)^\alpha$ because $d_0(\lambda)=d_0(1/\lambda)$.
This relation $d_0(\lambda)=d_0(1/\lambda)$ is true because $\lambda$ appears only in the form of $(\log \lambda)^2$ in $d_0(\lambda)$.
These properties of $d_0(\lambda)$ can be checked in Figure \ref{fig:1}, which shows $d_0(\lambda)$ for $\lambda\in [10^{-16}, 10^{16}]$.
\begin{figure}
  \centering
  \includegraphics{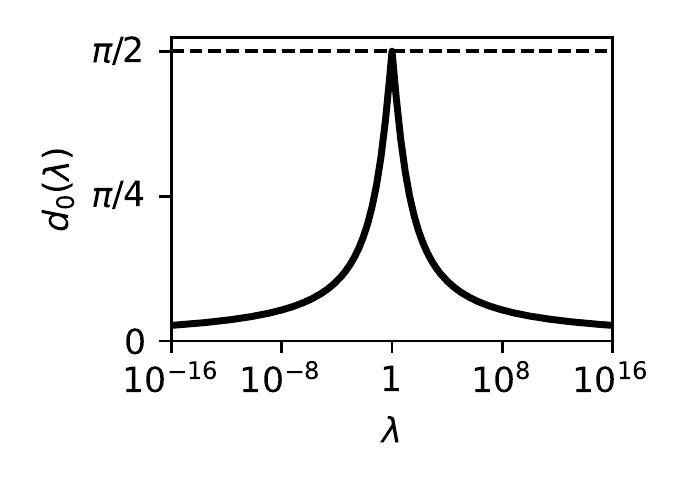}
  \caption{Distance $d_0(\lambda)$ (in \eqref{eq:d0}) of a strip where $\fde(z,\lambda)$ is analytic. Note that the horizontal axis is logarithmic.}
  \label{fig:1}
\end{figure}
Therefore, for sufficiently small $h$, the discretization error depends on the maximum and the minimum eigenvalues of $A$, say $\lambdamax,\lambdamin$, and the error can be written as
\begin{align}\label{eq:tmp08}
  \norm{A^\alpha - h\sum_{k=-\infty}^\infty \Fde(kh)}_2
  = \max_{\lambda\in\{\lambdamax,\lambdamin\}}\left|\lambda^\alpha-h\sum_{k=-\infty}^\infty \fde(kh,\lambda)\right|.
\end{align}
For $h$ that is not sufficiently small, the discretization error may depend on eigenvalues other than the extreme eigenvalues.
This is because the error of the DE formula depends on the coefficient multiplying the exponential factor in the discretization error as well as the exponential factor.

Now, we assume that $\lambdamax\lambdamin=1$ because this assumption minimizes the value
\begin{align}
  \max \{ |\log(\lambdamax)|, |\log(\lambdamin)| \},
\end{align}
which determines the convergence rate of the DE formula.
Note that the condition $\lambdamax\lambdamin=1$ can be satisfied without loss of generality because we can compute $A^\alpha = (\lambdamax\lambdamin)^{\alpha/2} (A/\sqrt{\lambdamax\lambdamin})^\alpha$, where the product of the maximum and the minimum eigenvalues of $A/\sqrt{\lambdamax\lambdamin}$ is 1.
Under this assumption, we have
\begin{align} \label{eq:3-13}
  &\norm{A^\alpha - \sum_{k=-\infty}^\infty h\Fde(kh)}_2 \le \calO\left(\exp\left(-\frac{2\pi d_0(\lambdamax)}{h}\right)\right)\\
  &\quad = \calO\left(\exp\left(-\frac{2\pi d_0(\sqrt{\kappa(A)})}{h}\right)\right)
\end{align}
because $d_0(\lambdamax) = d_0(1/\lambdamax) = d_0(\lambdamin)$.

Finally, to compare the DE formula with the Gaussian quadrature, let us rewrite the error \eqref{eq:3-13} with the number of abscissas $m$.
Assume that we have a finite interval $[l,r]$ for which the truncation error is smaller than the discretization error. Then, the total error can be rewritten as
\begin{align}\label{eq:rate_DE}
  &\norm{A^\alpha- \frac{h}{2}\left(\Fde(l) + \Fde(r)\right)- h\sum_{k=1}^{m-2} \Fde(l+kh)}_2\\
  &\quad \le \calO\left(\exp\left(-\frac{2\pi d_0\bigl(\sqrt{\kappa(A)}\bigr)}{r-l}m\right)\right),
\end{align}
where $h = (r-l)/(m-1)$.
The error \eqref{eq:rate_DE} can be estimated by computing $\lambda_{\mathrm{max}}^\alpha$ using the $m$-point DE formula.

\subsection{Comparison of the convergence speed}\label{subs:3-3_comparison}
In this subsection, we compare the convergence speeds of three quadrature formulas.
The first one, denoted by \texttt{DE}, is the $m$-point DE formula (Algorithm \ref{alg:1}).
The second one, denoted by \texttt{GJ1}, represents the application of the Gauss--Jacobi (GJ) quadrature to
\begin{align}\label{eq:ir_cardoso}
  A^\alpha = \frac{2\sin(\alpha\pi)}{\alpha\pi} A \int_{-1}^1 (1-u)^{1/\alpha-2}\left[(1+u)^{1/\alpha}I+(1-u)^{1/\alpha}A\right]^{-1} \dd{u},
\end{align}
which is obtained by applying $t(u)=(1+u)/(1-u)$ to \eqref{eq:ir_realaxis}.
Integral representations similar to \eqref{eq:ir_cardoso} are considered in (the literature)\cite{cardoso_computation_2012,fasi_computing_2018}.
The third one, denoted by \texttt{GJ2}, represents the application of the GJ quadrature to
\begin{align}\label{eq:ir_fasi}
  A^\alpha = \frac{2\sin(\alpha\pi)}{\pi} A \int_{-1}^1 (1-v)^{\alpha-1}(1+v)^{-\alpha}\left[(1-v)I+(1+v)A\right]^{-1} \dd{v},
\end{align}
which is obtained by applying $t(v)=(1-v)^\alpha/(1+v)^\alpha$ to \eqref{eq:ir_realaxis}.
Integral representations similar to \eqref{eq:ir_fasi} are considered in \cite{aceto_rational_2017,fasi_computing_2018}.

In \cite{fasi_computing_2018}, under the assumption that $\lambdamax\lambdamin=1$, the error of the \texttt{GJ2} is estimated\footnote{
  Generally, the error of the $m$-point Gaussian quadrature is represented as $\calO(\tau^{-2m})$ for some constant $\tau$.
  In this paper, we use the representation $\calO(\exp(-2\log(\tau)m))$ because we want to compare the convergence of Gaussian quadrature with that of the DE formula.
} to be
\begin{align}\label{eq:appa:rc-gj2}
  \calO\left(\exp\left(-2\log\left(\frac{1+[\kappa(A)]^{1/4}}{|1 - [\kappa(A)]^{1/4}|}\right)m\right)\right).
\end{align}
The error of \texttt{GJ1} when $\alpha$ is a unit fraction is estimated \cite{fasi_computing_2018} as 
\begin{align}\label{eq:appa:rc-gj1}
  \calO\left(\exp\left(-2\log\left(
    \frac{1+[\kappa(A)]^{\alpha/2} + \sqrt{2[\kappa(A)]^{\alpha/2}(1-\cos(\alpha\pi))}}{\sqrt{1+[\kappa(A)]^{\alpha}+2[\kappa(A)]^{\alpha/2}\cos(\alpha\pi)}}
  \right)m\right)\right).
\end{align}
When $\alpha$ is a non-unit fraction, the integrand in \eqref{eq:ir_cardoso} is not analytic on $u=\pm 1$, and the GJ quadrature for \eqref{eq:ir_cardoso} may not converge exponentially.

Based on the above discussion, we can represent the error of the three quadrature formulas in the form $\calO(\exp(-\phi m))$, where $\phi$ is a constant depending on the quadrature formulas, $\kappa(A)$, and $\alpha$.
Figure \ref{fig:2} shows the values of $\phi$ for $\alpha=0.1,~0.2,\dots,0.9$ and $\kappa(A)\in [1, 10^{16}]$.
For \texttt{DE}, a finite interval $[l,r]$ is obtained by using the subroutine \texttt{GetInterval} in Algorithm \ref{alg:1}.
The parameter $\epsilon$ is set to $\epsilon=2^{-53} \kappa(A)^{\alpha/2}$ so that the relative error is smaller than $2^{-53}\approx 1.1\times 10^{-16}$.
For \texttt{GJ1}, the speed $\phi$ is only considered when $\alpha$ is a unit fraction because of the analyticity of the integrand.
Note that the vertical axes in Figure \ref{fig:2} show the values $\phi$. Hence, the convergence is fast for large $\phi$.

\begin{figure}
  \centering
  \includegraphics[width=\linewidth]{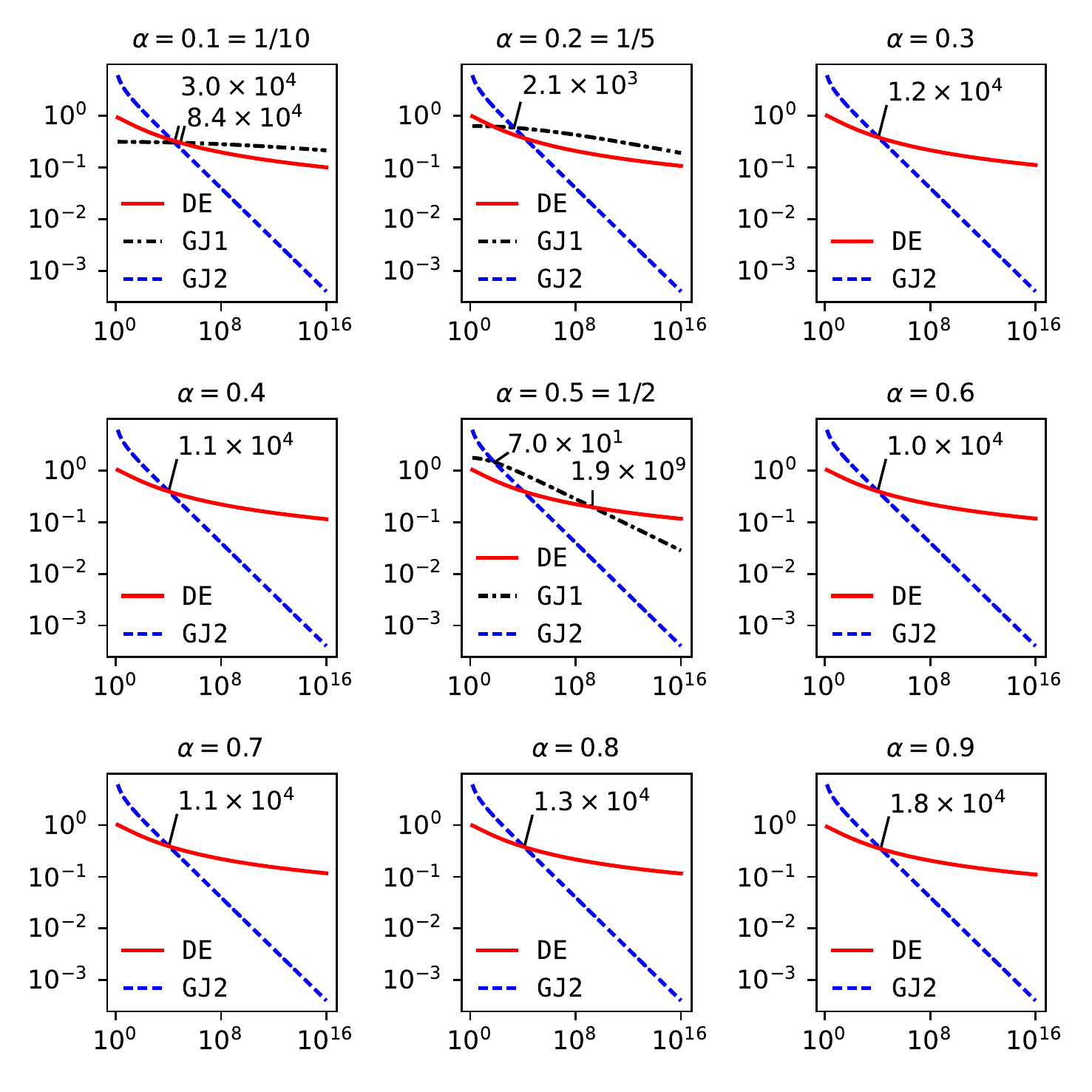}
  \caption{Convergence speed of the following quadrature formulas: the DE formula (\texttt{DE}), the GJ quadrature for \eqref{eq:ir_cardoso} (\texttt{GJ1}), and the GJ quadrature for \eqref{eq:ir_fasi} (\texttt{GJ2}).
  The upper bound of the error for \texttt{DE}, \texttt{GJ1}, and \texttt{GJ2} are shown in \eqref{eq:rate_DE}, \eqref{eq:appa:rc-gj1}, and \eqref{eq:appa:rc-gj2}, respectively.
  The horizontal axes are the condition number $\kappa(A)$.
  The vertical axes are the convergence speed, i.e., the constant $\phi$ in the error of the $m$-point quadrature formula $\exp(-\phi m)$.
  Therefore, the convergence of a quadrature formula is fast if the values are large.
  The condition numbers are indicated for the points where the fastest quadrature formula is changed.}
  \label{fig:2}
\end{figure}

Figure \ref{fig:2} shows that the convergence speed of \texttt{DE} is higher than that of \texttt{GJ2} when $\kappa(A)$ is larger than about $10^4$.
Conversely, the convergence speed of \texttt{DE} is lower than that of \texttt{GJ2} for small $\kappa(A)$.
When $\alpha$ is a unit fraction and $\kappa(A)$ is large, the convergence speed of \texttt{GJ1} is higher than that of each of the other algorithms except for $\alpha=0.5$.
From Figure \ref{fig:2}, we can select the fastest-converging quadrature formula.
For example, when we compute $A^\alpha$ where $\alpha=0.5$ and $\kappa(A)=10^{10}$, \texttt{DE} is the fastest to converge among the three.

\section{Numerical experiments}\label{sec:4:experiments}
The numerical experiments were carried out by using Julia 1.5.1 on a Core-i7 (3.6 GHz) CPU with 16 GB RAM.
The IEEE double-precision arithmetic is used unless otherwise stated.
For arbitrary precision arithmetic, the programs use the \texttt{BigFloat} data type of Julia, which gives roughly 77 significant decimal digits.
Abscissas and weights in the GJ quadrature are computed with \texttt{FastGaussQuadrature.jl}.\footnote{\url{https://github.com/JuliaApproximation/FastGaussQuadrature.jl}}
The test matrices are listed in Table \ref{tab:matrices}.
The programs for these experiments are available on Github.\footnote{\url{https://github.com/f-ttok/article-powmde/}}

To avoid computing extremely large or extremely small values, test matrices are scaled before the computation of $A^\alpha$.
Specifically, $A^\alpha = c^{-\alpha}(cA)^{\alpha}$ is computed with $c=1/\sqrt{\sigmamax\sigmamin}$, where $\sigmamax$ and $\sigmamin$ are the maximum and the minimum singular values of $A$, respectively.
In this section, we denote the scaled matrix $cA$ as $\tilde{A}$ for convenience.
When $A$ is an HPD matrix, the scaling parameter $c$ can be chosen specialized to \texttt{GJ2} so that the error of \texttt{GJ2} is small \cite{aceto_rational_2019}, see, Appendix B.
We denote this algorithm \texttt{GJ2pre} because we can regard selecting $c$ appropriately as a preconditioning to make a good use of \texttt{GJ2}.

\begin{table}\centering
  \caption{%
    Test matrices for the experiments.
    All matrices are real.
    The term SPD stands for symmetric positive definite.
    For \texttt{pores\_1}, we computed $(-A)^{\alpha}$ because all the eigenvalues of \texttt{pores\_1} lie in the left half plane.
    For \texttt{cell1}, \texttt{TSOPF\_RS\_b9\_c6}, and \texttt{circuit\_3} we computed $(A+10^{-8}I)^\alpha$, $(A+40.35I)^\alpha$, and $(A+3I)^\alpha$, respectively, because they have real negative eigenvalues.
  }
  \label{tab:matrices}
  \begin{tabular}{c|llll}\hline
    Test & Matrix & $n$ & Condition number & Properties\\\hline
    1,2 & \texttt{ex5} \cite{davis_university_2011} & 27 & $6.6\times 10^7$ & SPD\\
    & \texttt{pores\_1} \cite{davis_university_2011} & 30 & $1.8\times 10^6$ & Nonsymmetric\\
    \hline
    3 & \texttt{nos4} \cite{davis_university_2011} & 100 & $1.6\times 10^3$ & SPD\\
    & \texttt{bcsstk04} \cite{davis_university_2011} & 132 & $2.3\times 10^6$ & SPD\\
    & \texttt{lund\_b} \cite{davis_university_2011} & 147 & $3.0\times 10^4$ & SPD\\
    \hline
    4 & \texttt{SPD\_well} & 100 & $1.0\times 10^2$ & SPD\\
    & \texttt{SPD\_ill} & 100 & $1.0\times 10^7$ & SPD\\
    & \texttt{NS\_well} & 100 & $1.0\times 10^2$ & Nonsymmetric\\
    & \texttt{NS\_ill} & 100 & $1.0\times 10^7$ & Nonsymmetric\\
    \hline
    5 & \texttt{s2rmt3m1} \cite{davis_university_2011} & 5489 & $2.5\times 10^8$ & SPD\\
     & \texttt{fv3} \cite{davis_university_2011} & 9801 & $2.0\times 10^3$ & SPD\\
     & \texttt{poisson200} & 40000 & $1.6\times 10^4$ & SPD\\
     & \texttt{cell1} \cite{davis_university_2011} & 7055 & $8.5\times 10^8$ & Nonsymmetric\\
     & \texttt{TSOPF\_RS\_b9\_c6} \cite{davis_university_2011} & 7224 & $5.6\times 10^4$ & Nonsymmetric\\
     & \texttt{circuit\_3} \cite{davis_university_2011} & 12127 & $1.1\times 10^2$ & Nonsymmetric\\
    \hline
  \end{tabular}
\end{table}

\subsection{Test 1: Selected intervals in our algorithms}
In this test, we check that the subroutine \texttt{GetInterval} truncates the infinite interval appropriately.
We consider the computation of the square root of test matrices according to the following procedure.
First, we computed the reference solution $A^{1/2}$ by using the Denman--Beavers (DB) iteration \cite[p.\ 148]{higham_functions_2008} with arbitrary precision.
Next, we computed the finite interval in double precision.
We set $\epsilon = (c\rho(A))^\alpha 10^{-7}, (c\rho(A))^\alpha 10^{-14}$ where $c = 1/\sqrt{\sigmamax\sigmamin}$ so that the relative error in the 2-norm is smaller than $10^{-7}$ or $10^{-14}$.
The norms $\|\tilde{A}\|_2$ and $\|\tilde{A}^{-1}\|_2$ were computed in three ways.
First, the computation was performed using the function \texttt{svdvals} in Julia, which computes all the singular values of a matrix.
For the second method, the computation was performed to achieve three-digit accuracy by using \texttt{Arpack.jl},\footnote{\url{https://github.com/JuliaLinearAlgebra/Arpack.jl}} which computes the eigenvalues and singular values on the basis of the Arnoldi method.
For the third method, the computation was performed using \texttt{Arpack.jl} as in the second method, but we set $\epsilon = \tilde{\epsilon}/(1 + 1/(1-10^{-3}))$ for $\tilde{\epsilon} = 10^{-7}, 10^{-4}$.
The third method is denoted as \texttt{Arpackmod}.
Then, we computed $A^{1/2}$ by using the $m$-point DE formula with arbitrary precision to avoid the error caused by matrix inversion.
For reference, we also computed $A^{1/2}$ with the wider finite interval $[-6,6]$.

As a first result, the right end values $r$ of the finite intervals $[l,r]$ computed by \texttt{GetInterval} are listed in Table \ref{tab:finite_interval}.
The data in Table \ref{tab:finite_interval} show that there is only a slight difference between the interval with accurate norms (\texttt{svdvals}) and that with rough norms (\texttt{Arpack}).
If we consider the error of the norms (\texttt{Arpackmod}), \texttt{GetInterval} provides a slightly wider interval than that of \texttt{svdvals}.
Hence, the error of the norms can be neglected.
\begin{table}
  \centering
  \caption{Right end values $r$ of finite intervals $[l,r]$ selected by Algorithm \ref{alg:1}}
  \label{tab:finite_interval}
  \begin{tabular}{lc|rrr}
    Matrix & $\epsilon$ & \texttt{svdvals} & \texttt{Arpack} & \texttt{Arpackmod}\\
    \hline
    \texttt{ex5} & $10^{-7}$ & 4.0189456993 & 4.0189454235 & 4.0501871836\\
    \texttt{ex5} & $10^{-14}$ & 4.5713980347 & 4.5713979514 & 4.5895014861\\
    \texttt{pores\_1} & $10^{-7}$ & 3.9825518994 & 3.9825518993 & 4.0149323090\\
    \texttt{pores\_1} & $10^{-14}$ & 4.5506094014 & 4.5506093993 & 4.5690896458\\
  \end{tabular}
\end{table}

Next, in Figure \ref{fig:app-1}, we show the convergence histories of the DE formula.
Figure \ref{fig:app-1} shows that the approximations achieved the required accuracy in all cases.
Therefore, the truncation error of Algorithm \ref{alg:1} is smaller than the given tolerance.
As expected from the results listed in Table \ref{tab:finite_interval}, the computational results are accurate regardless of the accuracy of the parameters.
In addition, the DE formula with selected finite intervals converges faster than the DE formula with the wide interval.
In conclusion, \texttt{GetInterval} determines finite intervals appropriately, and we can use the roughly computed parameters.
\begin{figure}
  \centering
  \includegraphics[width=\linewidth]{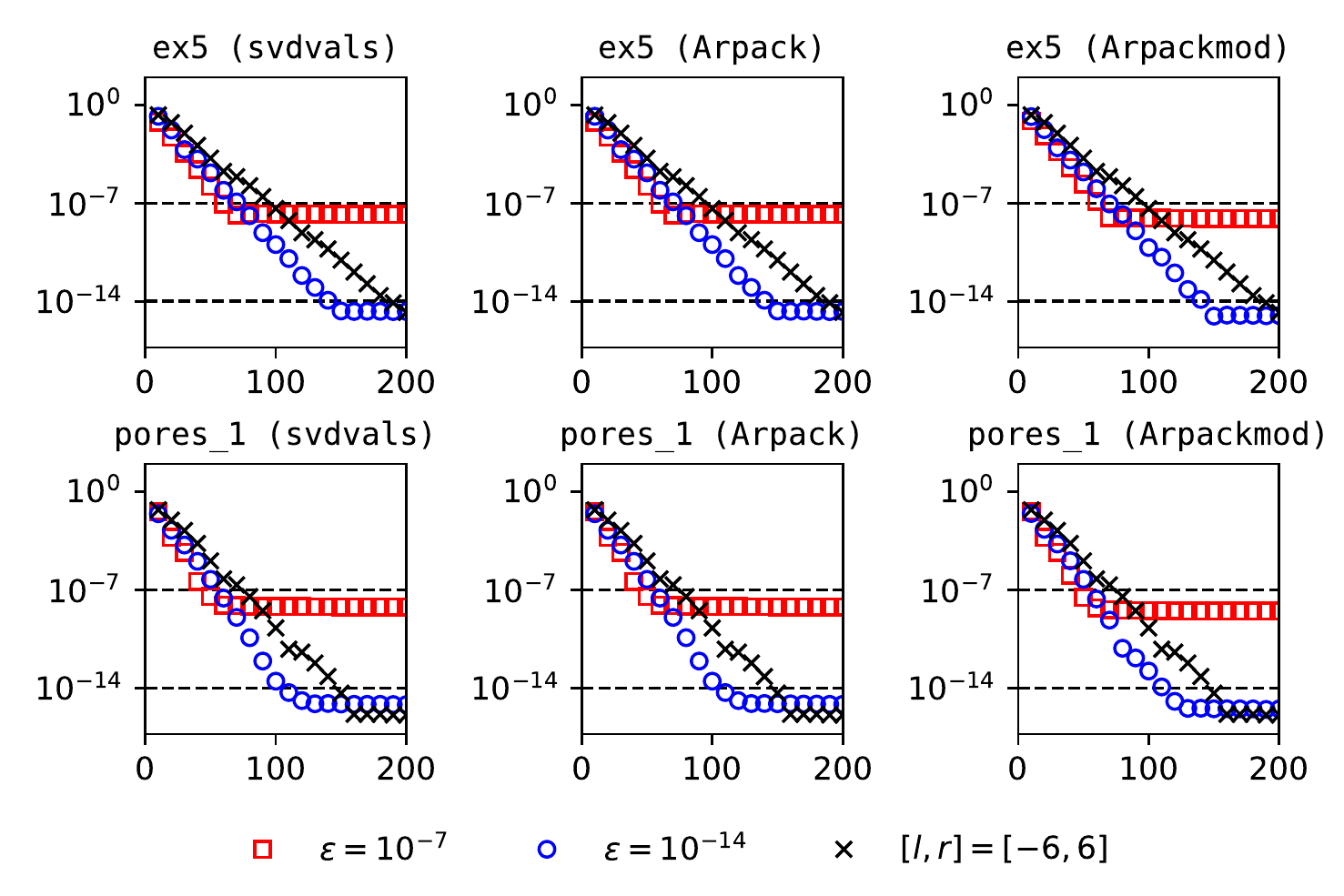}
  \caption{Convergence histories of the DE formula for $A^{1/2}$ in Test 1.
    The vertical axes show the relative error in the 2-norm, and the horizontal axis shows the number of abscissas.
    The labels in each graph indicate the test matrix and the method for computing norms $\|\tilde{A}\|_2$, $\|\tilde{A}^{-1}\|_2$.
    The notation \texttt{svdvals} indicates that the parameters are computed by using \texttt{svdvals}, \texttt{Arpack} indicates that the norms are computed approximately by using \texttt{Arpack.jl}, and \texttt{Arpackmod} indicates that the norms are computed by using \texttt{Arpack.jl}, but $\epsilon$ is set so that the error of norms does not affect the estimate of the truncation error.
    The values $\epsilon$ in the legend represent the tolerance of relative error, and ``$[l,r]=[-6,6]$'' indicates that the approximation is computed with a wide interval $[-6,6]$.
  }
  \label{fig:app-1}
\end{figure}

\subsection{Test 2: Accuracy of Algorithm 2}
In this test, we checked the accuracy of Algorithm \ref{alg:2} by computing $A^\alpha$ $(\alpha=0.2, 0.5, 0.8)$.
The reference solution $A^{\alpha}$ was computed by using the DB iteration and the inverse Newton method \cite[Alg.\ 7.14]{higham_functions_2008} with arbitrary precision.
The reference solution $A^{0.8}$ was computed via $A^{0.8} = (A^{1/5})^4$.
Then, we computed $\tilde{A}^\alpha$ by using Algorithm \ref{alg:2} with arbitrary precision.
In Algorithm \ref{alg:2}, the parameter $\epsilon$ is set so that the relative error in the 2-norm is smaller than $10^{-7}$ and $10^{-14}$.

The relative error and its estimate in Algorithm \ref{alg:2} are shown in Figure \ref{fig:3-1}.
Figure \ref{fig:3-1} shows that Algorithm \ref{alg:2} achieved the required accuracy for all cases.
The behavior of the errors shows that Algorithm \ref{alg:2} controlled the truncation error in \eqref{eq:relerr}, and the behavior of the estimates shows that Algorithm \ref{alg:2} controlled the discretization error in \eqref{eq:trap_err_estimation}.

\begin{figure}
  \centering
  \includegraphics[width=\linewidth]{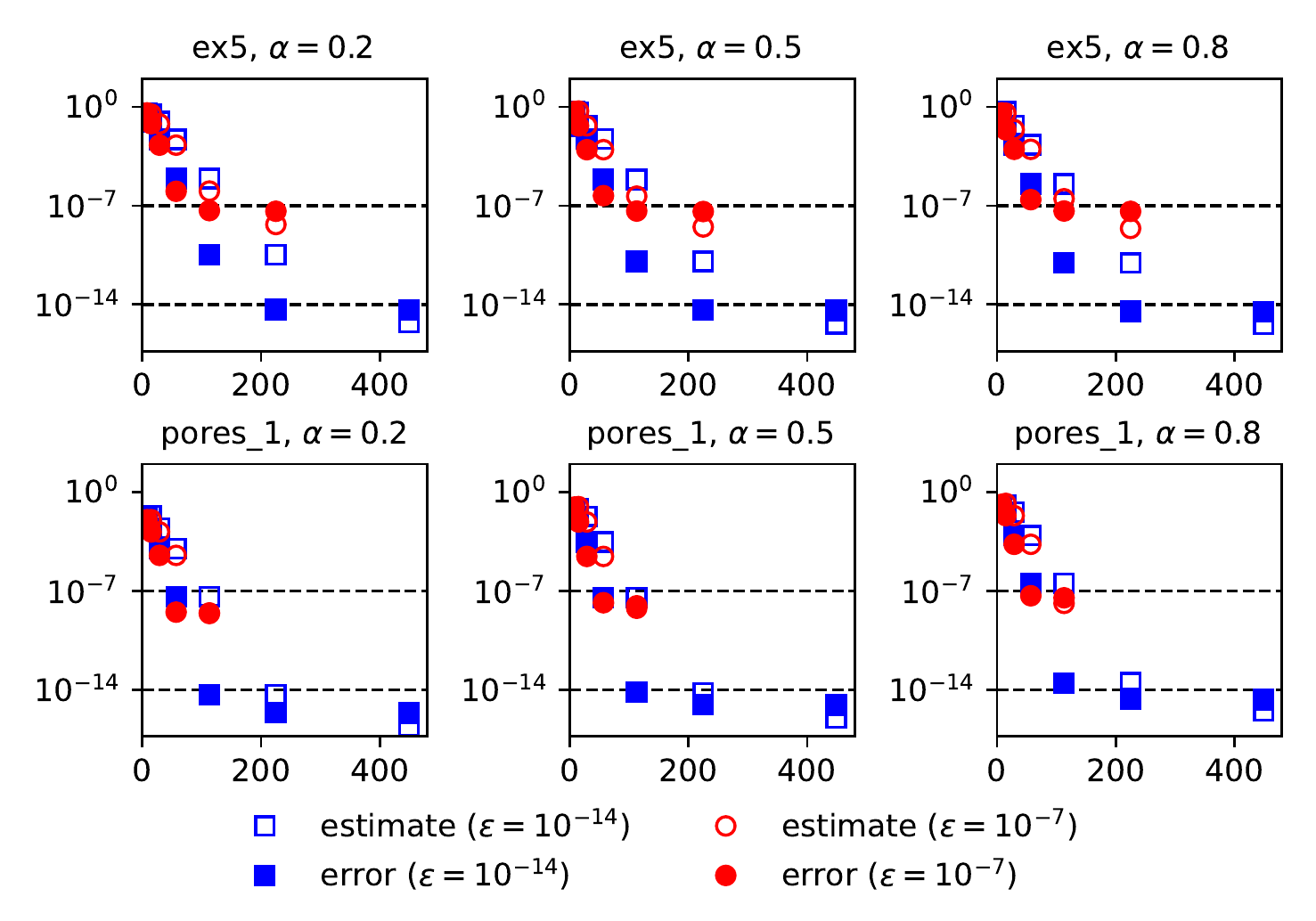}
  \caption{Relative error and its estimate in Algorithm \ref{alg:1}.
    The vertical axes show the relative error in the 2-norm, and the horizontal axes show the number of abscissas $m_s$.
    The values $\epsilon$ represent the tolerance for the relative error.
  }
  \label{fig:3-1}
\end{figure}

\subsection{Test 3: Checking the convergence speed of the DE formula}
In this test, we checked the convergence speed of the DE formula for the fractional power of symmetric positive definite (SPD) matrices.
We computed $\tilde{A}^{1/2}$ by using the DE formula (Algorithm \ref{alg:1}) with $\epsilon=2^{-53}(c\rho(\tilde{A}))^\alpha$ in double-precision arithmetic.
Subsequently, we computed $\tilde{\lambda}_{\mathrm{max}}^{1/2}$, i.e., the square root of the maximum eigenvalue of $\tilde{A}$, by using the DE formula with an integral interval identical to that of the DE formula for $\tilde{A}^{1/2}$.
The convergence histories of the DE formula are shown in Figure \ref{fig:3-2}.
In addition, we illustrated the estimates of the convergence rate \eqref{eq:rate_DE} in Figure \ref{fig:3-2}.

Figure \ref{fig:3-2} shows that the convergence of the DE formula for $\tilde{A}^{1/2}$ is almost equal to that of the DE formula for $\tilde{\lambda}_{\mathrm{max}}^{1/2}$.
Furthermore, the convergence rates of the quadrature formulas are approximately equal to the estimates.
These results support our convergence rate analysis, and hence, we can predict the convergence of the DE formula for $\tilde{A}^\alpha$ from the behavior of the DE formula for $\tilde{\lambda}_{\mathrm{max}}^\alpha$.
\begin{figure}
  \centering
  \includegraphics[width=\linewidth]{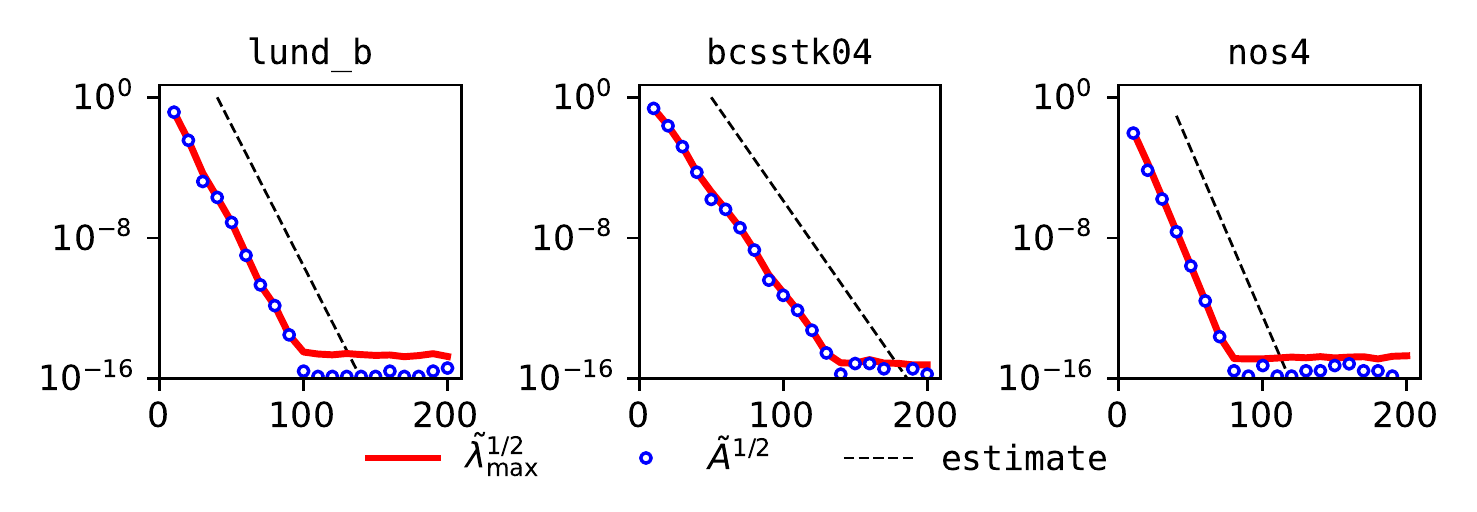}
  \caption{Convergence histories of the DE formula for the square root of the test matrices and its maximum eigenvalues. The vertical axes show the relative error in the 2-norm, and the horizontal axes show the number of abscissas.}
  \label{fig:3-2}
\end{figure}

\subsection{Test 4: Comparison of the DE formula with the Gauss--Jacobi quadrature}
We compared the convergence of the DE formula with that of the GJ quadrature.
For this test, we generated four test matrices:
\begin{enumerate}
  \item We generated two SPD matrices (\texttt{SPD\_well} and \texttt{SPD\_ill}).
  \begin{enumerate}
    \item[1-1.] We generated an orthogonal matrix $Q$ by taking the orthogonal vector of the QR decomposition of a random $100\times 100$ matrix.
    \item[1-2.] We generated a diagonal matrix $D = \mathrm{diag}(d_1,\dots,d_n)$ whose diagonal elements were derived from the geometric sequence $\{d_i\}_{i=1,\dots,100}$, where $d_1=\kappa^{-1/2}$ and $d_{100}=\kappa^{1/2}$ for $\kappa=10^2$.
    \item[1-3.] $A_{\mathrm{SPD\_well}}=QDQ^\top$
    \item[1-4.] We repeated Step 1-2 and 1-3 with the setting $\kappa=10^7$ and generated $A_{\mathrm{SPD\_ill}}$.
  \end{enumerate}
  \item We generated two nonsymmetric matrices (\texttt{NS\_well} and \texttt{NS\_ill}).
  \begin{enumerate}
    \item[2-1.] We generated a random $100\times 100$ matrix $R$.
    \item[2-2.] We generated $A_{\mathrm{NS\_well}} = \exp(cR)$ with $c=c_{\mathrm{well}} \approx 8.633\times 10^{-2}$ so that $\kappa(A_{\mathrm{NS\_well}}) \approx 10^2$.
    The matrix exponential was computed by using \texttt{exp} function in Julia.
    \item[2-3.] We scaled $A_{\mathrm{NS\_well}}$ so that the product of the maximum singular value and the minimum singular value becomes 1.
    \item[2-4.] We repeated Step 2-2 and 2-3 and generated $A_{\mathrm{NS\_ill}}$ with the setting $c=c_{\mathrm{ill}} \approx 3.029\times 10^{-1}$ so that $\kappa(A_{\mathrm{NS\_ill}}) \approx 10^7$.
  \end{enumerate}
\end{enumerate}
Then, we computed $A^\alpha$ for $\alpha=0.2,0.5,0.8$. The reference solutions are computed as in Test 1.
The convergence histories are shown in Figure \ref{fig:3-3}.
\begin{figure}
  \centering
  \includegraphics[width=\linewidth]{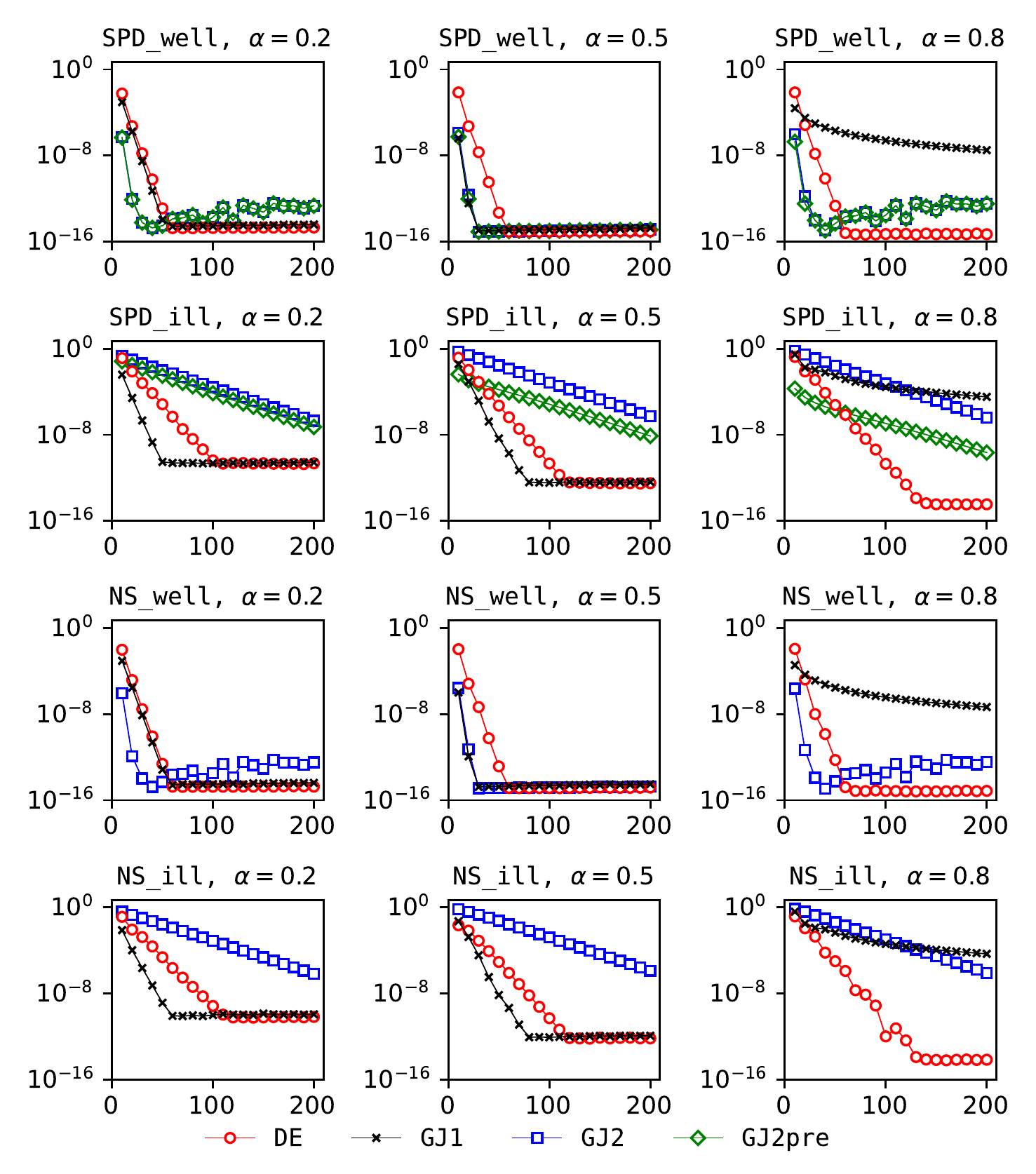}
  \caption{Convergence histories of the four or three quadrature formulas: The $m$-point DE formula (\texttt{DE}), the GJ quadrature for \eqref{eq:ir_cardoso} (\texttt{GJ1}), the GJ quadrature for \eqref{eq:ir_fasi} (\texttt{GJ2}), and preconditioned \texttt{GJ2} for HPD matrices (\texttt{GJ2pre}). The vertical axes show the relative error in the Frobenius norm, and the horizontal axes show the number of abscissas.}
  \label{fig:3-3}
\end{figure}
Figure \ref{fig:3-3} shows that the DE formula works well in all cases.
In addition, when $\alpha$ is a non-unit fraction ($\alpha=0.8$) and $\kappa(A)$ is large (\texttt{SPD\_ill} and \texttt{NS\_ill}), \texttt{DE} is the fastest to converge among the four or three quadrature formulas.

\subsection{Test 5: Computational time for \texorpdfstring{$A^\alpha\bmb$}{the action of the matrix fractional power}}
In this test, we measured the computational time of quadrature-based algorithms for computing $A^\alpha\bmb$.
The test matrix \texttt{poisson200} was generated by $A = L_{200}\otimes I_{200} + I_{200}\otimes L_{200}$, where $L_{200} = \mathrm{tridiag}(-1, 2, -1) \in \bbR^{200\times 200}$, $I_{200}$ is an identity matrix of size 200, and $\otimes$ is the Kronecker product.
The vector $\bmb$ is set to a random vector with norm 1.
The test procedure is as follows:
\begin{description}
  \item[SPD matrices] For SPD matrices, we considered five quadrature-based algorithms: \texttt{GJ1}, \texttt{GJ2}, \texttt{GJ2pre}, \texttt{DE}, and the algorithm based on the Cauchy integral presented in \cite[Sect. 3]{hale_computing_2008}, which is denoted by \texttt{Cauchy}.
  The algorithm \texttt{Cauchy} requires the computation of the complete elliptic integral of the first kind and the Jacobi elliptic functions.
  We implemented them by using \texttt{Elliptic.jl}\footnote{\url{https://github.com/nolta/Elliptic.jl}} for the real argument.
  For the complex argument, we used the relations in \cite[Eqs.\ (16.21.1) -- (16.21.4)]{abramowitz_handbook_1964}.

  First, we estimated the number of abscissas $m$ such that the absolute error $\|A^\alpha\bmb-\bmx\|_2$ is smaller than or equal to $10^{-6}$.
  For \texttt{GJ1}, \texttt{GJ2}, and \texttt{DE}, we can predict the convergence of these algorithms by the scalar convergence on the extreme eigenvalue of $\tilde{A}$.
  Hence, we counted the number of abscissas in the scalar case and used the same number of abscissas for the matrix case.
  For \texttt{GJ2pre}, we used the upper bound on the error, which can be computed with the extreme eigenvalues of $A$, derived in \cite{aceto_rational_2019}.
  The extreme eigenvalues were computed with three-digit accuracy using \texttt{Arpack.jl}.
  For \texttt{Cauchy}, the number of abscissas was estimated by applying \texttt{Cauchy} to a $2 \times 2$ matrix whose eigenvalues were the extreme eigenvalues of $\tilde{A}$ as in \cite{fasi_computing_2018}.
  When the estimated number of the abscissas was greater than 1000, we set the number of abscissas to 1000.
  Then, we computed $A^\alpha\bmb$.
  The linear systems in the integrand were solved using a sparse direct linear solver in \texttt{SuiteSparse.jl}.\footnote{\url{https://github.com/JuliaLinearAlgebra/SuiteSparse.jl}}

  \item[Nonsymmetric matrices] For general matrices, we considered three adaptive quadrature algorithms, namely, Algorithm \ref{alg:2} (denoted by \texttt{DE}) and two algorithms based on the GJ quadrature for \eqref{eq:ir_cardoso} and \eqref{eq:ir_fasi} (denoted by \texttt{GJ1} and \texttt{GJ2}, respectively).
  Algorithms \texttt{GJ1} and \texttt{GJ2} compute approximations by using the $(2^{s}m_0)$-point GJ quadrature for $s=0,1,2,\dots$.
  As in \cite[Alg.\ 5.2]{cardoso_computation_2012}, the error of the GJ quadrature is estimated by
  \begin{align}
    \left\lVert \bmx_{2m} - A^\alpha\bmb \right\rVert_2
    \le \left\lVert \bmx_{2m} - \bmx_{m} \right\rVert_2,
  \end{align}
  where $\bmx_m$ is the approximation of the $m$-point GJ quadrature.
  In contrast to the algorithm of \cite[Alg.\ 5.2]{cardoso_computation_2012}, our algorithms did not compute the Schur decomposition and the matrix square root because $A$ was sparse.

  As for SPD matrices, we set the tolerance so that $\|A^\alpha\bmb - \bmx\|_2 \le 10^{-6}$.
  For all algorithms, we set the initial number of abscissas $m_0$ as 8.
  The extreme singular values for scaling the matrices were computed via \texttt{Arpack.jl}, and the linear systems were solved using \texttt{SuiteSparse.jl}.
  The algortihms were stopped when the number of the evaluations of the integrand exceeded 1000.
\end{description}

Table \ref{tab:cputime_spd} lists the computational times and the number of evaluations of the integrand of the quadrature-based algorithms for SPD matrices.
This computational time includes the time for computing the extreme singular values and the extreme eigenvalues for scaling the test matrices.
When $\alpha$ is a non-unit fraction ($\alpha = 0.8$) and $A$ is ill-conditioned matrix (\texttt{s2rmt3m1}), \texttt{DE} is the fastest. Otherwise, \texttt{GJ1} or \texttt{GJ2pre} is the fastest.
Although \texttt{Cauchy} requires a small number of abscissas, it is not the fastest in this test because of the use of complex arithmetic.
Table \ref{tab:cputime_ns} lists the times of the algorithms for nonsymmetric matrices.
For well-conditioned matrix (\texttt{circuit\_3}), \texttt{GJ2} is the fastest, otherwise, \texttt{DE} is the fastest.
The algorithm \texttt{DE} seems to be more robust than \texttt{GJ1} and \texttt{GJ2} because of the efficiency of the discretization error estimation.

\begin{table}[p]
  \centering
  \rotatebox{90}{
    \begin{minipage}{0.8\textheight}
      \caption{Comparison of quadrature-based algorithms for SPD matrices in terms of CPU time (in seconds) and the number of evaluations of the integrand (in parentheses).
      If the number of evaluations exceeds 1000, the corresponding results are underlined.
      The results of the fastest algorithm among the five algorithms are written in bold font.}
      \begin{tabular}{ll|rr|rr|rr|rr|rr}
        Matrix & $\alpha$ & \multicolumn{2}{|c}{\texttt{GJ1}} & \multicolumn{2}{|c}{\texttt{GJ2}} & \multicolumn{2}{|c}{\texttt{GJ2pre}} & \multicolumn{2}{|c}{\texttt{DE}} & \multicolumn{2}{|c}{\texttt{Cauchy}}\\
        \hline
        \texttt{s2rmt3m1} & 0.2 & \textbf{0.59} & (\textbf{39}) & 7.20 & (511) & 6.38 & (451) & 1.07 & (73) & 1.30 & (21)\\
        \texttt{s2rmt3m1} & 0.8 & \underline{14.37} & (\underline{1000}) & 10.55 & (731) & 6.98 & (485) & \textbf{1.52} & (\textbf{95}) & 1.90 & (31)\\
        \texttt{fv3} & 0.2 & 0.30 & (23) & 0.30 & (24) & \textbf{0.28} & (\textbf{23}) & 0.36 & (30) & 0.41 & (11)\\
        \texttt{fv3} & 0.8 & 2.10 & (193) & 0.32 & (26) & \textbf{0.26} & (\textbf{21}) & 0.38 & (29) & 0.38 & (10)\\
        \texttt{poisson200} & 0.2 & \textbf{1.43} & (\textbf{25}) & 1.82 & (41) & 1.69 & (38) & 1.59 & (33) & 2.01 & (13)\\
        \texttt{poisson200} & 0.8 & 14.34 & (352) & 1.97 & (45) & \textbf{1.55} & (\textbf{34}) & 1.65 & (33) & 1.91 & (12)\\
      \end{tabular}
      \label{tab:cputime_spd}
    \end{minipage}
  }
  \rotatebox{90}{
    \begin{minipage}{0.8\textheight}
      \caption{Comparison of quadrature-based algorithms for nonsymmetric matrices in terms of CPU time (in seconds) and the number of evaluations of the integrand (in parentheses).
      If the number of evaluations exceeds 1000, the corresponding results are underlined.
      The results of the fastest algorithm among the three algorithms are written in bold font.}
      \begin{tabular}{ll|rr|rr|rr}
        Matrix & $\alpha$ & \multicolumn{2}{|c}{\texttt{GJ1}} & \multicolumn{2}{|c}{\texttt{GJ2}} & \multicolumn{2}{|c}{\texttt{DE}}\\
        \hline
        \texttt{cell1} & 0.2 & 4.00 & (248) & \underline{15.86} & (\underline{1016}) & \textbf{3.55} & (\textbf{225})\\
        \texttt{cell1} & 0.8 & \underline{15.88} & (\underline{1016}) & \underline{15.87} & (\underline{1016}) & \textbf{3.60} & (\textbf{225})\\
        \texttt{TSOPF\_RS\_b9\_c6} & 0.2 & 0.93 & (120) & 1.93 & (248) & \textbf{0.88} & (\textbf{113})\\
        \texttt{TSOPF\_RS\_b9\_c6} & 0.8 & \underline{7.82} & (\underline{1016}) & 1.92 & (248) & \textbf{0.87} & (\textbf{113})\\
        \texttt{circuit\_3} & 0.2 & 2.36 & (120) & \textbf{0.81} & (\textbf{24}) & 1.36 & (57)\\
        \texttt{circuit\_3} & 0.8 & 2.21 & (120) & \textbf{0.91} & (\textbf{24}) & 1.33 & (57)\\
      \end{tabular}
      \label{tab:cputime_ns}
    \end{minipage}
  }
\end{table}

\section{Conclusion}\label{sec:5:conclusion}
In this work, we considered the DE formula to compute $A^\alpha$.
To utilize the DE formula, we proposed a method of truncating the infinite interval based on the analysis of the truncation error specialized for $A^\alpha$.
Based on the truncation error analysis, we presented two algorithms, including an adaptive quadrature algorithm.
We analyzed the convergence rate of the DE formula for HPD matrices and found that when $\alpha$ is a non-unit fraction and $\kappa(A)$ is large, the DE formula converges faster than the GJ quadrature.

We performed five numerical tests.
The results of the first two tests showed that our algorithms achieved the required accuracy.
The result of the third test showed that our estimate of the convergence rate is appropriate.
The results of the fourth and fifth tests showed that our algorithms work well, especially when the algorithms based on the GJ quadrature converge slowly.

In the future, we plan to consider the practical performance of our algorithm with parallel computing when applied to large practical problems.

\section*{Acknowledgments}
We would like to thank Prof. Ken'ichiro Tanaka for his valuable comments on the convergence rate analysis of the DE formula.
We are grateful to the two anonymous referees for the careful reading and the comments that substantially enhanced the quality of the manuscript.
This work was supported by JSPS KAKENHI Grant Numbers 18J22501 and 18H05392.
The authors would like to thank Enago (www.enago.jp) for the English language review.

\appendix
\section{On the behavior of \texorpdfstring{$|\fde(x+\rmi y,\lambda)|$}{|fde(x+iy)|} \texorpdfstring{as $x\to \pm\infty$}{}} \label{sec:app-calc}
In this section, we consider the asymptotic behavior of $\fde$ defined in \eqref{eq:fde}.
We show that there exist constants $c_l, c_r, l$, and $r$ such that for any $y$ satisfying $|y| < d_0(\lambda) - \delta\; (<\pi/2)$,
\begin{align}
  &|\fde(x+\rmi y)| < c_l \exp\left(-\frac{\alpha\pi \cos(d_0(\lambda)-\delta)}{2} \sinh(|x|)\right) \cosh x  \quad (x < l),
  \label{eq:app-absfdel}\\
  &|\fde(x+\rmi y)| < c_r \exp\left(-\frac{(1-\alpha)\pi \cos(d_0(\lambda)-\delta)}{2} \sinh x\right) \cosh x
  \quad (x > r).
  \label{eq:app-absfder}
\end{align}

From $|\cosh(x+\rmi y)|=\sqrt{\cosh^2 x + \cos^2 y-1}$, it follows that
\begin{align} \label{eq:app-abscosh}
  |\cosh(x+\rmi y)| \le \cosh x.
\end{align}
Besides, for $x<0$,
\begin{align}
  \left|\exp\left(\frac{\alpha\pi}{2}\sinh(x+\rmi y)\right)\right|
  \le \exp\left(-\frac{\alpha\pi}{2} \sinh (|x|) \cos(d_0(\lambda)-\delta) \right).
  \label{eq:app-absexpasinh}
\end{align}
Because $\fde$ is bounded in $\calD_{d_0(\lambda) - \delta}$, there exists a constant $c_1$ that satisfies 
\begin{align}\label{eq:app-absexpsinhlmdl}
  \left|\exp\left(\frac{\pi}{2}\sinh(x+\rmi y)\right) + \lambda \right| > c_1.
\end{align}
By combining \eqref{eq:app-abscosh}, \eqref{eq:app-absexpasinh}, \eqref{eq:app-absexpsinhlmdl}, we get \eqref{eq:app-absfdel}.

Next, it is true that
\begin{align}\label{eq:app-absexpasinhr}
  \left|\exp\left(\frac{\alpha\pi}{2}\sinh(x+\rmi y)\right)\right|
  = \exp\left(\frac{\alpha\pi}{2} \sinh(x)\cos(y)\right).
\end{align}
In addition, there exists a constant $c_2$ such that for $x>0$ satisfying $\exp (\pi \sinh x \cos(d_0(\lambda) - \delta)/2) > \lambda$, 
\begin{align}
  &\left|\exp\left(\frac{\pi}{2}\sinh(x+\rmi y)\right) + \lambda \right|
  \ge \exp\left(\frac{\pi}{2}\sinh x\cos y\right) - \lambda\\
  &\quad \ge c_2 \exp\left(\frac{\pi}{2}\sinh x\cos y\right).
\end{align}
Hence, 
\begin{align}\label{eq:app-absexpsinhlmdr}
  &\frac{|\exp(\alpha\pi\sinh(x+\rmi y)/2)|}{|\exp(\pi\sinh(x+\rmi y)/2) + \lambda|}
  \le \frac{1}{c_2} \exp\left(-\frac{(1-\alpha)\pi}{2}\sinh(x)\cos(y) \right)\\
  &\quad \le \frac{1}{c_2} \exp\left(-\frac{(1-\alpha)\pi \cos(d_0(\lambda)-\delta)}{2}\sinh(x)\right).
\end{align}

By combining \eqref{eq:app-abscosh}, \eqref{eq:app-absexpasinhr}, and \eqref{eq:app-absexpsinhlmdr}, we get  \eqref{eq:app-absfder}.

\section{A preconditioning technique of \texttt{GJ2} for HPD matrices}
The study \cite{aceto_rational_2019} considers the computation of fractional powers of a positive self-adjoint operator $\calL$ by using the GJ quadrature for the integral
\begin{align} \label{eq:ir_aceto}
  \calL^{-\alpha}
    = \frac{2\sin(\alpha\pi)\tau^{1-\alpha}}{\pi}
      \int_{-1}^{1} (1-t)^{-\alpha} (1+t)^{\alpha-2} \left(\tau\frac{1-t}{1+t}\calI + \calL \right) \dd{t},
\end{align}
where $\alpha\in(0,1)$, $\calI$ is an identity operator, and $\tau>0$ is a parameter \cite[Eq. (2)]{aceto_rational_2019}.
When $\calL$ is an HPD matrix, we can rewrite \eqref{eq:ir_aceto} to \eqref{eq:ir_fasi}, which is the integral used in \texttt{GJ2}, by substituting $\calL^{-1} = A$ and $\tau=1$.

One of the contributions of \cite{aceto_rational_2019} is proposing a method for selecting $\tau$ to reduce the error.
The proposed selection is
\begin{align} \label{eq:selection_tau}
  \tau = 
  \begin{cases}
    \tau^-(m) & (m < \bar{m}),\\
    \tau^+(m) & (m \ge \bar{m}),
  \end{cases}
\end{align}
where $m$ is the number of abscissas,
\begin{align}
  & \bar{m} = \frac{\alpha}{2\sqrt{2}} \left(\log(\rme^2 \kappa)\right)^{1/2} \kappa^{1/4} \qquad \left(\kappa = \frac{\mumax}{\mumin}\right),\\
  & \tau^-(m) = \mumin \left(\frac{\alpha}{2\rme m}\right)^2 \exp\left(2\rmW\left(\frac{4\rme m^2}{\alpha^2}\right) \right),\\
  & \tau^+(m) = \left(
    -\frac{\alpha \mumax^{1/2}\log(\kappa)}{8m}
    + \sqrt{\left(\frac{\alpha \mumax^{1/2}\log(\kappa)}{8m}\right)^2 + (\mumax\mumin)^{1/2}}
    \right)^2,
  \end{align}
$\rmW$ is the Lambert W function, and $\mumax$ and $\mumin$ are the maximum and the minimum eigenvalue of $\calL$, respectively.
One can apply the preconditioning technique to \texttt{GJ2}, which computes $A^\alpha$ for $\alpha \in (0,1)$, by substituting $\mumax = 1/\lambdamin, \ \mumin = 1/\lambdamax$, where $\lambdamax$ and $\lambdamin$ are the maximum and the minimum eigenvalue of $A$, and computing $A^\alpha = \tau^{-\alpha} (\tau A)^\alpha$.

% \bibliography{references}
% \bibliographystyle{abbrv}

\end{document}